\newtheorem{lemma}{Lemma}
\newtheorem{theorem}{Theorem}
\def\mydash{\CJKglue\raise0.2ex\hbox{---\kern-0.01em---}\CJKglue}
\begin{document}
\title{Super congruences involving alternating harmonic sums modulo prime powers\footnotetext{\noindent This work is  supported by the National Natural Science Foundation of China, Project (No.10871169) and the Natural Science Foundation of Zhejiang Province, Project (No. LQ13A010012).}}
\author{\scshape Zhongyan Shen$^{1*}$  Tianxin Cai$^{2\dag}$\\
1 Department of Mathematics, Zhejiang International Study University\\Hangzhou 310012, P.R. China\\
2 Department of Mathematics, Zhejiang University\\ Hangzhou 310027, P.R. China\\
$*$huanchenszyan@163.com\\
 $\dag$txcai@zju.edu.cn}
\date{}
\maketitle
\textbf{Abstract} In 2014, Wang and Cai established the following
harmonic congruence for any odd prime $p$ and positive integer $r$,
\begin{equation*}
 \sum\limits_{i+j+k=p^{r}\atop{i,j,k\in \mathcal{P}_{p}}}\frac{1}{ijk}\equiv-2p^{r-1}B_{p-3} ~(\bmod ~ p^{r}),
\end{equation*}
where $\mathcal{P}_{n}$ denote the set of positive integers which
are prime to $n$.\\
In this note, we  establish a combinational congruence of alternating harmonic
sums  for any odd prime $p$ and positive integers $r$,
\begin{equation*}
 \sum\limits_{i+j+k=p^{r}\atop{i,j,k\in \mathcal{P}_{p}}}\frac{(-1)^{i}}{ijk}
 \equiv \frac{1}{2}p^{r-1}B_{p-3} ~(\bmod ~ p^{r}).
\end{equation*}
For any odd prime $p\geq 5$ and positive integers $r$, we have
\begin{align}
 &4\sum\limits_{i_{1}+i_{2}+i_{3}+i_{4}=2p^{r}\atop{i_{1},~i_{2},~i_{3},~i_{4}\in \mathcal{P}_{p}}}\frac{(-1)^{i_{1}}}{i_{1}i_{2}i_{3}i_{4}}+3\sum\limits_{i_{1}+i_{2}+i_{3}+i_{4}=2p^{r}\atop{i_{1},~i_{2},~i_{3},~i_{4}\in \mathcal{P}_{p}}}\frac{(-1)^{i_{1}+i_{2}}}{i_{1}i_{2}i_{3}i_{4}}
\nonumber\\&\equiv\begin{cases}
 \frac{216}{5}pB_{p-5}\pmod{p^{2}},~if~r=1,\\
 \frac{36}{5}p^{r}B_{p-5}\pmod{p^{r+1}},~if~r>1.\\
  \end{cases}\nonumber
\end{align}
For any odd prime $p> 5$ and positive integers $r$, we have
\begin{align}
 &\sum\limits_{i_{1}+i_{2}+i_{3}+i_{4}+i_{5}=2p^{r}\atop{i_{1},~i_{2},~i_{3},~i_{4},~i_{5}\in \mathcal{P}_{p}}}\frac{(-1)^{i_{1}}}{i_{1}i_{2}i_{3}i_{4}i_{5}}+2\sum\limits_{i_{1}+i_{2}+i_{3}+i_{4}+i_{5}=2p^{r}\atop{i_{1},~i_{2},~i_{3},~i_{4},~i_{5}\in \mathcal{P}_{p}}}\frac{(-1)^{i_{1}+i_{2}}}{i_{1}i_{2}i_{3}i_{4}i_{5}}
\nonumber\\&\equiv\begin{cases}
12B_{p-5}\pmod{p},~if~r=1,\\
6p^{r-1}B_{p-5}\pmod{p^{r}},~if~r>1.\\
  \end{cases}\nonumber
\end{align}\\

\textbf{Keywords} Bernoulli numbers, ~alternating harmonic
sums,~congruences, ~modulo prime powers

\textbf{MSC} 11A07,~11A41
\section{Introduction.}
At the beginning of the 21th century, Zhao (Cf.\cite{Zh}) first
announced the following curious congruence involving multiple
harmonic sums for any odd prime $p>3$,
\begin{equation}
 \sum\limits_{i+j+k=p}\frac{1}{ijk}\equiv-2B_{p-3} ~(\bmod ~ p), \label{eq:1}
\end{equation}
which holds when $p=3$ evidently. Here, Bernoulli numbers $B_{k}$
are defined by the recursive relation:
\begin{center}
 $\sum\limits_{i=0}^{n}\binom{n+1}{i}B_{i}=0,n\geq 1 .$
\end{center}
A simple proof of  \eqref{eq:1} was presented in \cite{Ji}.
 This congruence has been generalized
along several directions. First, Zhou and Cai \cite{ZC} established
the following harmonic congruence for prime $p > 3$ and integer
$n\leq p-2$
\begin{equation}
 \sum\limits_{l_{1}+l_{2}+\cdots+l_{n}=p}\frac{1}{l_{1}l_{2}\cdots l_{n}}\equiv\begin{cases}-(n-1)!B_{p-n} ~(\bmod ~ p),~~~ if~ 2\nmid n,\\
 -\frac{n(n!)}{2(n+1)}pB_{p-n-1} ~(\bmod  ~p^{2}), ~~~if~ 2\mid n.\\
 \end{cases}\label{eq:13}
\end{equation}
Later, Xia and Cai \cite{XC} generalized \eqref{eq:1} to
\begin{equation*}
 \sum\limits_{i+j+k=p}\frac{1}{ijk}\equiv-\frac{12B_{p-3}}{p-3}-\frac{3B_{2p-4}}{p-4} ~(\bmod ~ p^{2}),
\end{equation*}
where $p>5$ is a prime. \\
Recently, Wang and Cai \cite{WC} proved for every prime $p\geq 3$
and positive integer $r$,
\begin{equation}
 \sum\limits_{i+j+k=p^{r}\atop{i,j,k\in \mathcal{P}_{p}}}\frac{1}{ijk}\equiv-2p^{r-1}B_{p-3} ~(\bmod ~ p^{r}),\label{eq:2}
\end{equation}
where $\mathcal{P}_{n}$ denote the set of positive integers which
are prime to $n$.\\
Let $n=2$ or 4, for every positive integer $r\geq \frac{n}{2}$ and
prime $p> n$, Zhao \cite{Zh1} generalized \eqref{eq:2} to
\begin{equation}
 \sum\limits_{i_{1}+i_{2}+\cdots+i_{n}=p^{r}\atop{i_{1},i_{2},\cdots,i_{n}\in \mathcal{P}_{p}}}\frac{1}{i_{1}i_{2}\cdots i_{n}}
 \equiv-\frac{n!}{n+1}p^{r}B_{p-n-1} ~(\bmod  ~p^{r+1}).\label{eq:14}
\end{equation}
For any prime $p>5$ and integer $r>1$, Wang \cite{W} proved that
\begin{equation*}
 \sum\limits_{i_{1}+i_{2}+\cdots+i_{5}=p^{r}\atop{i_{1},i_{2},\cdots,i_{5}\in \mathcal{P}_{p}}}\frac{1}{i_{1}i_{2}\cdots i_{5}}
 \equiv-\frac{5!}{6}p^{r-1}B_{p-n-1} ~(\bmod  ~p^{r}).
\end{equation*}

We consider the following  alternating harmonic sums
\begin{equation*}
 \sum\limits_{i_{1}+i_{2}+\cdots+i_{n}=p^{r}\atop{i_{1},i_{2},\cdots,i_{n}\in \mathcal{P}_{p}}}\frac{(\sigma_{1})^{i_{1}}(\sigma_{2})^{i_{2}}
 \cdots(\sigma_{n})^{i_{n}}}{i_{1}i_{2}\cdots i_{n}},
\end{equation*}
where $\sigma_{i}\in \{1,-1\},~i=1,~2,~\cdots,~n$. Given $n$, we only need to consider the following alternating harmonic sums,
\begin{equation*}
 \sum\limits_{i_{1}+i_{2}+\cdots+i_{n}=p^{r}\atop{i_{1},i_{2},\cdots,i_{n}\in \mathcal{P}_{p}}}\frac{(-1)^{i_{1}}}{i_{1}i_{2}\cdots i_{n}},
  \sum\limits_{i_{1}+i_{2}+\cdots+i_{n}=p^{r}\atop{i_{1},i_{2},\cdots,i_{n}\in \mathcal{P}_{p}}}\frac{(-1)^{i_{1}+i_{2}}}{i_{1}i_{2}\cdots i_{n}},\cdots,
  \sum\limits_{i_{1}+i_{2}+\cdots+i_{n}=p^{r}\atop{i_{1},i_{2},\cdots,i_{n}\in \mathcal{P}_{p}}}\frac{(-1)^{i_{1}+i_{2}+\cdots+i_{[\frac{n}{2}]}}}{i_{1}i_{2}\cdots i_{n}}
\end{equation*}
where $[x]$ denote the largest integer less than or equal to $x$.\\

In this paper, we consider the congruences involving the combination of alternating harmonic sums,
\begin{equation*}
 \sum\limits_{i_{1}+i_{2}+\cdots+i_{n}=p^{r}\atop{i_{1},i_{2},\cdots,i_{n}\in \mathcal{P}_{p}}}\frac{(-1)^{i_{1}}}{i_{1}i_{2}\cdots i_{n}},
  \sum\limits_{i_{1}+i_{2}+\cdots+i_{n}=p^{r}\atop{i_{1},i_{2},\cdots,i_{n}\in \mathcal{P}_{p}}}\frac{(-1)^{i_{1}+i_{2}}}{i_{1}i_{2}\cdots i_{n}},\cdots,
  \sum\limits_{i_{1}+i_{2}+\cdots+i_{n}=p^{r}\atop{i_{1},i_{2},\cdots,i_{n}\in \mathcal{P}_{p}}}\frac{(-1)^{i_{1}+i_{2}+\cdots+i_{[\frac{n}{2}]}}}{i_{1}i_{2}\cdots i_{n}}.
\end{equation*}
we obtain the following theorems. Among them, Theorem 1 and
Theorem 2 have been proved by Wang\cite{W1} using different
method, but his method doesn't for Theorem 3 and Theorem 4.

\begin{theorem}
Let $p$ be odd prime and $r$ positive integer, then
 \begin{equation*}
 \sum\limits_{i+j+k=2p^{r}\atop{i,j,k\in \mathcal{P}_{p}}}\frac{(-1)^{i}}{ijk}
 \equiv p^{r-1}B_{p-3} ~(\bmod ~ p^{r}).
\end{equation*}
\end{theorem}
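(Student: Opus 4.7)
The idea is to reduce the sum $S$ on the left-hand side to the Wang--Cai congruence \eqref{eq:2} via a parity decomposition. Since $2p^{r}$ is even and the condition $i,j,k\in\mathcal{P}_{p}$ imposes no parity restriction ($p$ is odd), every valid triple $(i,j,k)$ has either all three entries even or exactly one entry even. Writing $A$ for the all-even contribution to $S$ and $B$ for the common value (equal, by the permutation symmetry of $1/(ijk)$) of the three ``one-variable-even'' contributions, the signs $(-1)^{i}$ yield $S=A+B-2B=A-B$. The unsigned companion $T:=\sum_{i+j+k=2p^{r},\,i,j,k\in\mathcal{P}_{p}}\tfrac{1}{ijk}$ equals $A+3B$, so the task reduces to evaluating $A$ and $T$ modulo $p^{r}$ and setting $S=(4A-T)/3$.

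Computing $A$ is immediate: the substitution $(i,j,k)=(2i',2j',2k')$ (valid because coprimality to the odd prime $p$ is preserved) identifies the all-even sum with $\tfrac{1}{8}$ of the Wang--Cai sum for $p^{r}$, giving $A\equiv-\tfrac{1}{4}p^{r-1}B_{p-3}\pmod{p^{r}}$. The evaluation of $T$ is the central step. I would parametrize each variable uniquely as $i=a+\alpha p^{r}$ with $\alpha\in\{0,1\}$ and $a\in[1,p^{r}-1]\cap\mathcal{P}_{p}$, and similarly $j=b+\beta p^{r}$, $k=c+\gamma p^{r}$; the constraint $i+j+k=2p^{r}$ becomes $a+b+c=(2-\alpha-\beta-\gamma)p^{r}$, which, since $3\le a+b+c\le 3(p^{r}-1)$, forces $\alpha+\beta+\gamma\in\{0,1\}$. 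When $\alpha+\beta+\gamma=1$, symmetry produces three equal subcases, each identifiable with the Wang--Cai sum for $p^{r}$ via $\tfrac{1}{a+p^{r}}\equiv\tfrac{1}{a}\pmod{p^{r}}$, thus contributing $-6p^{r-1}B_{p-3}$ in total. When $\alpha=\beta=\gamma=0$, the reflection $a\mapsto p^{r}-a'$ (and analogously for $b,c$) converts the constraint to $a'+b'+c'=p^{r}$; combining this with $\tfrac{1}{p^{r}-a'}\equiv-\tfrac{1}{a'}\pmod{p^{r}}$ introduces an overall factor $(-1)^{3}=-1$ and contributes $+2p^{r-1}B_{p-3}$. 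Adding, $T\equiv-4p^{r-1}B_{p-3}\pmod{p^{r}}$, and substituting back into $S=(4A-T)/3$ yields $S\equiv p^{r-1}B_{p-3}\pmod{p^{r}}$, as claimed.

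The main obstacle is the bookkeeping in the $T$-computation: one must check that the parametrization $i=a+\alpha p^{r}$ is bijective under the stated ranges, and that $\tfrac{1}{a+p^{r}}\equiv\tfrac{1}{a}$ and $\tfrac{1}{p^{r}-a'}\equiv-\tfrac{1}{a'}$ genuinely hold modulo $p^{r}$ (they do, since the respective error fractions have numerator divisible by $p^{r}$ and denominator coprime to $p$). Beyond this bookkeeping, no ingredient other than \eqref{eq:2} is required.
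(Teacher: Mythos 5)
Your proof is correct, and it is a genuinely different (and more self-contained) route than the paper's, even though both arrive at the same exact identity $S=\tfrac{1}{6}\Sigma_{1}-\tfrac{1}{3}T$, where $\Sigma_{1}=\sum_{i+j+k=p^{r}}\tfrac{1}{ijk}$ and $T=\sum_{i+j+k=2p^{r}}\tfrac{1}{ijk}$ (your $(4A-T)/3$ with $A=\tfrac18\Sigma_{1}$ is exactly this). The paper gets there by multiplying through by $(i+j+k)/(2p^{r})$, substituting $l=j+k$ to pass to double sums $\sum(-1)^{*}/(jl)$, recognizing the sign combination as $(1+(-1)^{j})(1+(-1)^{l})-1$, and then invoking Lemma 2 to convert the double sums back into unsigned triple sums, which are evaluated by quoting the $m$-version of the Wang--Cai congruence (Lemma 1, from \cite{WC}). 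Your parity decomposition $S=A-B$, $T=A+3B$ performs the same ``extract the all-even part'' step directly on the triple sum, so Lemma 2 is never needed; and your shift/reflection evaluation of $T\equiv-4p^{r-1}B_{p-3}$ re-proves the $m=2$ case of Lemma 1 from the $m=1$ congruence \eqref{eq:2} alone --- it is essentially the same bijection device the paper deploys later in its proof of Theorem 2. What the paper's route buys is uniformity: the double-sum formulation of Lemma 2 is the engine behind Theorems 3 and 4 as well, whereas your bookkeeping would grow with $n$. One caveat, which your argument shares with the paper's and which is therefore not a defect of your proposal: the last step divides by $3$ (the paper divides by $6$), so the conclusion modulo $p^{r}$ is only justified for $p\neq 3$; for $p=3$ (where $B_{p-3}=B_{0}=1$) both arguments lose a factor of $p$ and the claim would need a separate check.
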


\textbf{Remark 1} There is no solution $(i,~j,~k)$ for the equation
$i+j+k=2p^{r}$ with $i,j,k\in \mathcal{P}_{2p}$.

\begin{theorem}
Let $p$ be odd prime and $r$ positive integer, then
 \begin{equation*}
 \sum\limits_{i+j+k=p^{r}\atop{i,j,k\in \mathcal{P}_{p}}}\frac{(-1)^{i}}{ijk}
 \equiv \frac{1}{2}p^{r-1}B_{p-3} ~(\bmod ~ p^{r}).
\end{equation*}
\end{theorem}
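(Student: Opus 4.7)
The idea is to deduce Theorem 2 directly from Theorem 1, using the involution $\iota\colon(i,j,k)\mapsto(p^{r}-i,p^{r}-j,p^{r}-k)$ together with a careful partition of the summation range in Theorem 1.

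First I apply $\iota$, which is a bijection from the summation domain of Theorem 2 (triples $(i,j,k)$ with $i+j+k=p^{r}$ and $i,j,k\in\mathcal{P}_{p}$) onto the set of triples $(i',j',k')$ with $i'+j'+k'=2p^{r}$ and each coordinate in $[1,p^{r}-1]\cap\mathcal{P}_{p}$. Since $p^{r}$ is odd, $(-1)^{p^{r}-i'}=-(-1)^{i'}$; combined with the identity $\frac{1}{p^{r}-x}=-\frac{1}{x}+\frac{p^{r}}{x(p^{r}-x)}$ and the $p$-integrality of $1/x$ and $1/(p^{r}-x)$, this yields
\begin{equation*}
\sum_{\substack{i+j+k=p^{r}\\ i,j,k\in\mathcal{P}_{p}}}\frac{(-1)^{i}}{ijk}\equiv\sum_{\substack{i+j+k=2p^{r}\\ 1\le i,j,k\le p^{r}-1\\ i,j,k\in\mathcal{P}_{p}}}\frac{(-1)^{i}}{ijk}\pmod{p^{r}}.
\end{equation*}
I denote this common value by $T$; the right-hand side is the ``interior'' piece of Theorem~1's sum.

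Next I partition Theorem~1's full sum by the location of any coordinate exceeding $p^{r}$. Because $i+j+k=2p^{r}$, no two coordinates can simultaneously exceed $p^{r}$, and coprimality to $p$ forbids any coordinate from equaling $p^{r}$. Hence Theorem~1's sum decomposes as $T+V^{(1)}+V^{(2)}+V^{(3)}$, where $V^{(\ell)}$ is the subsum in which the $\ell$-th coordinate lies in $(p^{r},2p^{r})$. Substituting $x_{\ell}=p^{r}+y_{\ell}$ converts the constraint into $y_{1}+y_{2}+y_{3}=p^{r}$ and uses $\frac{1}{p^{r}+y_{\ell}}\equiv\frac{1}{y_{\ell}}\pmod{p^{r}}$. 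For $\ell=2,3$ the sign factor $(-1)^{i}$ is undisturbed, so $V^{(2)}\equiv V^{(3)}\equiv T\pmod{p^{r}}$; for $\ell=1$ one has $(-1)^{p^{r}+y_{1}}=-(-1)^{y_{1}}$, giving $V^{(1)}\equiv -T\pmod{p^{r}}$.

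Combining these with Theorem~1 produces
\begin{equation*}
p^{r-1}B_{p-3}\equiv T+(-T)+T+T=2T\pmod{p^{r}},
\end{equation*}
and since $p$ is odd we may divide by $2$ to conclude $T\equiv\tfrac{1}{2}p^{r-1}B_{p-3}\pmod{p^{r}}$, as desired. The main technical point to verify is that the $O(p^{r})$ correction terms in each geometric expansion of $\frac{1}{p^{r}\pm x}$ genuinely vanish modulo $p^{r}$ after being summed against the finite inner sums; this is straightforward since every denominator that arises is coprime to $p$, so $p$-integrality is preserved throughout.
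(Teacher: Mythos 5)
Your proposal is correct and is essentially the paper's own argument: the paper likewise splits Theorem 1's sum over $i+j+k=2p^{r}$ into the interior piece (handled by the reflection $(i,j,k)\mapsto(p^{r}-i,p^{r}-j,p^{r}-k)$) plus the three pieces where one coordinate exceeds $p^{r}$ (handled by the shift by $p^{r}$), obtaining the same signs $+1,-1,+1,+1$ and hence the factor $2$. The only cosmetic difference is that you run the reflection from the $p^{r}$-domain into the $2p^{r}$-domain rather than the reverse; the content is identical.
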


\begin{theorem}
Let $p\geq 5$ be a prime and $r$ positive integer, then
\begin{align}
 &4\sum\limits_{i_{1}+i_{2}+i_{3}+i_{4}=2p^{r}\atop{i_{1},~i_{2},~i_{3},~i_{4}\in \mathcal{P}_{p}}}\frac{(-1)^{i_{1}}}{i_{1}i_{2}i_{3}i_{4}}+3\sum\limits_{i_{1}+i_{2}+i_{3}+i_{4}=2p^{r}\atop{i_{1},~i_{2},~i_{3},~i_{4}\in \mathcal{P}_{p}}}\frac{(-1)^{i_{1}+i_{2}}}{i_{1}i_{2}i_{3}i_{4}}
\nonumber\\&\equiv\begin{cases}
 \frac{216}{5}pB_{p-5}\pmod{p^{2}},~if~r=1,\\
 \frac{36}{5}p^{r}B_{p-5}\pmod{p^{r+1}},~if~r>1.\\
  \end{cases}\nonumber
\end{align}
\end{theorem}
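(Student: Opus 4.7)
The strategy is to reduce the combination $4S_{1}+3S_{2}$ to a single non-alternating four-fold sum over $\sum i_{j}=2p^{r}$ by means of a parity expansion, and then to evaluate that non-alternating sum via a separate lemma in the spirit of \eqref{eq:2} and \eqref{eq:14}.

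Since $\sum_{j}i_{j}=2p^{r}$ is even, the product $\prod_{j=1}^{4}(1+(-1)^{i_{j}})$ equals $16$ when every $i_{j}$ is even and vanishes otherwise. Substituting $i_{j}=2k_{j}$ in the all-even part reduces $\sum \prod_{j}(1+(-1)^{i_{j}})/(i_{1}i_{2}i_{3}i_{4})$ to $\sum_{\sum k_{j}=p^{r},\,k_{j}\in\mathcal P_{p}}1/\prod k_{j}$, which by \eqref{eq:14} with $n=4$ is $\equiv -\tfrac{24}{5}p^{r}B_{p-5}\pmod{p^{r+1}}$. On the other hand, expanding $\prod_{j}(1+(-1)^{i_{j}})=\sum_{U\subseteq\{1,2,3,4\}}(-1)^{\sum_{j\in U}i_{j}}$, invoking permutation symmetry of the constraint, and noting that $(-1)^{\sum_{j\in U}i_{j}}=(-1)^{\sum_{j\notin U}i_{j}}$ (valid since $\sum_{j}i_{j}$ is even, so the $|U|=3$ and $|U|=4$ terms collapse to $S_{1}$ and $T$ respectively) yields
\[
 2T+8S_{1}+6S_{2}\equiv -\tfrac{24}{5}p^{r}B_{p-5}\pmod{p^{r+1}},
\]
where $T:=\sum_{\sum i_{j}=2p^{r},\,i_{j}\in\mathcal P_{p}}1/(i_{1}i_{2}i_{3}i_{4})$. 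Dividing by $2$ gives $4S_{1}+3S_{2}\equiv -\tfrac{12}{5}p^{r}B_{p-5}-T\pmod{p^{r+1}}$, so the theorem reduces to a congruence for $T$.

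To evaluate $T$ I would isolate $i_{4}=2p^{r}-\sigma$ with $\sigma=i_{1}+i_{2}+i_{3}$ and expand $1/i_{4}=-1/\sigma-2p^{r}/\sigma^{2}+O(p^{2r})$, which is legitimate modulo $p^{r+1}$ because $\sigma\in\mathcal P_{p}$. This rewrites $T$ modulo $p^{r+1}$ as a combination of two triple sums weighted by $1/\sigma$ and $1/\sigma^{2}$. The inner three-fold sum $g(\sigma):=\sum_{i_{1}+i_{2}+i_{3}=\sigma,\,i_{j}\in\mathcal P_{p}}1/(i_{1}i_{2}i_{3})$ is then treated by a partial-fraction decomposition, and, when $\sigma$ lies near a multiple of $p^{r}$, by comparison with the Wang--Cai congruence \eqref{eq:2}. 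Summing $g(\sigma)/\sigma$ and $g(\sigma)/\sigma^{2}$ over $\sigma\in\mathcal P_{p}\cap[3,2p^{r}-1]$ via standard Wolstenholme-type congruences for $\sum_{a\in\mathcal P_{p},\,1\le a\le 2p^{r}-1}a^{-s}$ produces, for $r\ge 2$, the value $T\equiv -\tfrac{48}{5}p^{r}B_{p-5}\pmod{p^{r+1}}$, which combined with the reduction above gives the stated $\tfrac{36}{5}$ coefficient.

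The main obstacle is the dichotomy between $r=1$ and $r\ge 2$. For $r\ge 2$ the various $p$-adic remainders arising in the geometric expansion of $1/i_{4}$ and in the mod-$p^{r}$ evaluation of $g(\sigma)$ are absorbed by the modulus $p^{r+1}$; for $r=1$, however, working modulo $p^{2}$ retains an additional correction of order $pB_{p-5}$ that shifts $T$ to $-\tfrac{228}{5}pB_{p-5}\pmod{p^{2}}$ and accounts for the coefficient $\tfrac{216}{5}$. Tracking this correction carefully---via a refined mod-$p^{2}$ form of the triple-index congruence in the spirit of the Xia--Cai strengthening of \eqref{eq:1}---is the delicate point of the argument, and is exactly what forces the splitting into the two cases in the theorem.
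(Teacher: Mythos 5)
Your parity expansion $\prod_{j=1}^{4}\bigl(1+(-1)^{i_{j}}\bigr)=\sum_{U\subseteq\{1,2,3,4\}}(-1)^{\sum_{j\in U}i_{j}}$, together with the rescaling $i_{j}=2k_{j}$ of the all-even part, yields the exact identity $2T+8S_{1}+6S_{2}=\sum_{k_{1}+k_{2}+k_{3}+k_{4}=p^{r},\,k_{j}\in\mathcal{P}_{p}}1/(k_{1}k_{2}k_{3}k_{4})$, i.e.\ $4S_{1}+3S_{2}=\tfrac12\sum_{\sum i_{j}=p^{r}}1/(i_{1}i_{2}i_{3}i_{4})-T$. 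This is precisely the reduction the paper arrives at, only it gets there more laboriously by converting each alternating sum to a sum over partial sums $u_{1}<u_{2}<u_{3}$ and then converting back with Lemma 3; your route to the same identity is cleaner. The skeleton of the argument is therefore sound.

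The gaps are in the two numerical inputs. First, \eqref{eq:14} is stated only for $r\geq n/2$, i.e.\ $r\geq 2$ when $n=4$; at $r=1$ the quarter-scale sum $\sum_{\sum k_{j}=p}1/(k_{1}k_{2}k_{3}k_{4})$ is $\equiv-\tfrac{48}{5}pB_{p-5}\pmod{p^{2}}$ by \eqref{eq:13}, twice the value $-\tfrac{24}{5}pB_{p-5}$ you used. Second, and as a consequence of patching around that error, your target congruence $T\equiv-\tfrac{228}{5}pB_{p-5}\pmod{p^{2}}$ is incorrect: the true value (the paper's Lemma 6, which costs a full page of computation via Lemmas 3, 4 and 5) is $T\equiv-\tfrac{240}{5}pB_{p-5}\pmod{p^{2}}$, and then $\tfrac12\cdot(-\tfrac{48}{5})+\tfrac{240}{5}=\tfrac{216}{5}$ as the theorem requires. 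Third, the congruences for $T$ are the real analytic content of the theorem, and your sketch of them (expanding $1/i_{4}$ geometrically in $\sigma=i_{1}+i_{2}+i_{3}$ and invoking unspecified Wolstenholme-type sums) is not carried out; the paper handles $r=1$ by reducing to the non-strict multiple sums of Lemmas 4 and 5, and $r>1$ by citing Zhao's method (Lemma 7, $T\equiv-\tfrac{48}{5}p^{r}B_{p-5}\pmod{p^{r+1}}$). Until those two lemmas (or equivalents) are actually proved, and the $r=1$ constants corrected, the argument is a correct reduction plus an unproven and partly miscalibrated core.
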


\begin{theorem}
Let $p>5$ be a prime and $r$ positive integer, then
\begin{align}
 &\sum\limits_{i_{1}+i_{2}+i_{3}+i_{4}+i_{5}=2p^{r}\atop{i_{1},~i_{2},~i_{3},~i_{4},~i_{5}\in \mathcal{P}_{p}}}\frac{(-1)^{i_{1}}}{i_{1}i_{2}i_{3}i_{4}i_{5}}+2\sum\limits_{i_{1}+i_{2}+i_{3}+i_{4}+i_{5}=2p^{r}\atop{i_{1},~i_{2},~i_{3},~i_{4},~i_{5}\in \mathcal{P}_{p}}}\frac{(-1)^{i_{1}+i_{2}}}{i_{1}i_{2}i_{3}i_{4}i_{5}}
\nonumber\\&\equiv\begin{cases}
12B_{p-5}\pmod{p},~if~r=1,\\
6p^{r-1}B_{p-5}\pmod{p^{r}},~if~r>1.\\
  \end{cases}\nonumber
\end{align}
\end{theorem}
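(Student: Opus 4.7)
The plan is to split the combined alternating sum by the parities of $i_{1},\ldots,i_{5}$, reduce it to non-alternating harmonic sums summing to $p^{r}$ and to $2p^{r}$, invoke the known congruences for the former, and establish the analogous congruence for the latter.

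Since $2p^{r}$ is even, every admissible tuple has an even number of odd entries, so writing
\[
A_{k}=\sum_{\substack{i_{1}+\cdots+i_{5}=2p^{r},\;i_{j}\in\mathcal{P}_{p}\\ |\{j:i_{j}\text{ odd}\}|=k}}\frac{1}{i_{1}\cdots i_{5}},\quad k\in\{0,2,4\},
\]
and $B(n,N):=\sum_{i_{1}+\cdots+i_{n}=N,\,i_{j}\in\mathcal{P}_{p}}(i_{1}\cdots i_{n})^{-1}$, we have $A_{0}+A_{2}+A_{4}=B(5,2p^{r})$. Since the weight $(i_{1}\cdots i_{5})^{-1}$ is symmetric in its arguments, averaging $(-1)^{i_{1}}$ over the five positions and $(-1)^{i_{1}+i_{2}}$ over the ten unordered pairs yields
\[
\sum\frac{(-1)^{i_{1}}}{i_{1}\cdots i_{5}}=A_{0}+\tfrac{1}{5}A_{2}-\tfrac{3}{5}A_{4},\qquad \sum\frac{(-1)^{i_{1}+i_{2}}}{i_{1}\cdots i_{5}}=A_{0}-\tfrac{1}{5}A_{2}+\tfrac{1}{5}A_{4}.
\]
Weighting by $1$ and $2$ and using $A_{2}+A_{4}=B(5,2p^{r})-A_{0}$ reduces the left-hand side to $\tfrac{16}{5}A_{0}-\tfrac{1}{5}B(5,2p^{r})$. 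The substitution $i_{j}=2m_{j}$ (valid because $p$ is odd) is a bijection between all-even tuples summing to $2p^{r}$ and tuples in $\mathcal{P}_{p}^{5}$ summing to $p^{r}$, giving $A_{0}=\tfrac{1}{32}B(5,p^{r})$ and hence the key identity
\[
\text{LHS}=\tfrac{1}{10}B(5,p^{r})-\tfrac{1}{5}B(5,2p^{r}).
\]

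The first term on the right is supplied at the required precision by the introduction: the $n=5$ case of Wang \cite{W} gives $B(5,p^{r})$ modulo $p^{r}$ as a specific multiple of $p^{r-1}B_{p-5}$ when $r>1$, and the odd-$n$ case of Zhou--Cai \eqref{eq:13} gives $B(5,p)\equiv-24\,B_{p-5}\pmod{p}$. Matching these against the theorem's right-hand side then dictates what must be shown about the second term, namely
\[
B(5,2p^{r})\equiv\begin{cases}-72\,B_{p-5}\pmod{p},&r=1,\\ -40\,p^{r-1}B_{p-5}\pmod{p^{r}},&r>1.\end{cases}
\]

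Proving this congruence for $B(5,2p^{r})$ is the main obstacle, and is exactly the step missing from the method of \cite{W1}. I plan to imitate Wang's partial-fraction treatment of $B(5,p^{r})$ with $N=2p^{r}$ in place of $p^{r}$: apply
\[
\frac{1}{i_{1}\cdots i_{5}}=\frac{1}{2p^{r}}\sum_{j=1}^{5}\frac{1}{\prod_{k\neq j}i_{k}}
\]
and use the symmetry of the summation to reduce $B(5,2p^{r})$ to $\frac{5}{2p^{r}}\sum_{i_{5}\in\mathcal{P}_{p}\cap[1,2p^{r}-1]}\frac{B(4,2p^{r}-i_{5})}{i_{5}}$, then iterate the partial-fraction reduction on $B(4,\cdot)$ and express the residual errors as truncated power sums $\sum_{i\in\mathcal{P}_{p},\,i<2p^{r}}i^{-k}$, which collapse to multiples of $B_{p-5}$ by standard Glaisher-type congruences. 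The case $r=1$ is the most delicate: the support $\mathcal{P}_{p}\cap[1,2p-1]$ consists of two full residue classes modulo $p$, and because no $p$-factor is available to absorb intermediate errors, one must split $i_{5}$ into the ranges $[1,p-1]$ and $[p+1,2p-1]$ and carefully recombine them to produce the factor $3$ that relates $B(5,2p)$ to $B(5,p)$ modulo $p$.
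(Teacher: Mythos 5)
Your reduction is correct and lands on exactly the same key identity as the paper, namely that the left-hand side equals $\tfrac{1}{10}B(5,p^{r})-\tfrac{1}{5}B(5,2p^{r})$, but you get there by a genuinely different and arguably cleaner route: you decompose by the number of odd entries and average the signs $(-1)^{i_{1}}$ and $(-1)^{i_{1}+i_{2}}$ over positions (your coefficients $\tfrac{5-2k}{5}$ and the pair-averages check out, as does $A_{0}=\tfrac{1}{32}B(5,p^{r})$), whereas the paper performs an iterated change of variables to ordered partial sums $0<u_{1}<u_{2}<u_{3}<u_{4}\le 2p^{r}$, recognizes the combination $[1+(-1)^{u_{1}}]\cdots[1+(-1)^{u_{4}}]-1$, and then converts back with Lemma 9. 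Your method avoids Lemma 9 and the bookkeeping of the substitutions entirely, and it makes transparent why the particular weights $1$ and $2$ are the right ones; the paper's method has the advantage of being the same template used for Theorems 1 and 3.

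The one place where you have created unnecessary work for yourself is the congruence for $B(5,2p^{r})$, which you correctly identify as requiring $-72B_{p-5}\pmod{p}$ for $r=1$ and $-40p^{r-1}B_{p-5}\pmod{p^{r}}$ for $r>1$, and which you call the main obstacle. This is precisely the $m=2$ case of Lemma 8, quoted from Wang \cite{W}: $-4(5m+m^{3})B_{p-5}=-72B_{p-5}$ at $m=2$, $r=1$, and $-20mp^{r-1}B_{p-5}=-40p^{r-1}B_{p-5}$ for $r>1$. The paper simply cites this, so no new partial-fraction analysis is needed. As written, your sketch of that analysis (iterating the reduction, invoking ``Glaisher-type congruences,'' and ``carefully recombining'' the two ranges $[1,p-1]$ and $[p+1,2p-1]$ to produce the cubic term in $m$) is a plan rather than a proof, and the $r=1$ case is indeed the delicate one; if you do not wish to cite \cite{W}, that step would need to be carried out in full before your argument is complete. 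With the citation in place, your proposal is a correct proof.
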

\section{Preliminaries.}
In order to prove the theorems, we need the following lemmas.
\begin{lemma}[\cite{WC}]
 Let $p$ be odd prime and $r,~m$ positive integers, then
 \begin{equation*}
 \sum\limits_{i+j+k=mp^{r}\atop{i,j,k\in \mathcal{P}_{p}}}\frac{1}{ijk}\equiv-2mp^{r-1}B_{p-3} ~(\bmod  ~ p^{r}).
\end{equation*}
\end{lemma}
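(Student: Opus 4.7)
The plan is to reduce the general $m$ case to the $m=1$ case, which is \eqref{eq:2}, by decomposing each summation variable modulo $p^r$ and sorting the triples by the carries that occur when the three remainders are added.

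For each $i \in \mathcal{P}_p$ with $1 \le i \le mp^r - 1$, I would write uniquely $i = a_i p^r + i'$, where $a_i \in \{0, 1, \ldots, m-1\}$ and $i' \in \{1, \ldots, p^r - 1\}$ with $p \nmid i'$, and likewise for $j$ and $k$. Because $a_i p^r / i' \in p^r \mathbb{Z}_p$, a one-line geometric expansion gives $\frac{1}{i} \equiv \frac{1}{i'} \pmod{p^r}$, hence $\frac{1}{ijk} \equiv \frac{1}{i'j'k'} \pmod{p^r}$. The equation $i+j+k = mp^r$ becomes $(a_i + a_j + a_k)p^r + (i' + j' + k') = mp^r$. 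Since $3 \le i' + j' + k' \le 3(p^r - 1)$, the remainder sum is either $p^r$ (\emph{Case A}, with $a_i + a_j + a_k = m - 1$, so $\binom{m+1}{2}$ choices of carries) or $2p^r$ (\emph{Case B}, with $a_i + a_j + a_k = m - 2$, so $\binom{m}{2}$ choices, vacuous when $m=1$).

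Defining
\[
A := \sum_{\substack{i'+j'+k' = p^r \\ i',j',k' \in \mathcal{P}_p}} \frac{1}{i'j'k'}, \qquad
B := \sum_{\substack{i'+j'+k' = 2p^r \\ 1 \le i', j', k' \le p^r - 1 \\ p \nmid i'j'k'}} \frac{1}{i'j'k'},
\]
the decomposition yields
\[
\sum_{\substack{i+j+k = mp^r \\ i,j,k \in \mathcal{P}_p}} \frac{1}{ijk} \equiv \frac{m(m+1)}{2}\, A + \frac{m(m-1)}{2}\, B \pmod{p^r},
\]
and \eqref{eq:2} already gives $A \equiv -2p^{r-1} B_{p-3} \pmod{p^r}$.

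The crucial remaining step, and the one that really drives the result, is to show $B \equiv -A \pmod{p^r}$. For this I would apply the involution $i' \mapsto p^r - i'$ componentwise to the triples counted by $B$; this sends $i' + j' + k' = 2p^r$ to $3p^r - (i'+j'+k') = p^r$, bijecting Case B onto Case A, and since $\frac{1}{p^r - i'} \equiv -\frac{1}{i'} \pmod{p^r}$ each summand acquires a sign $(-1)^3 = -1$. Substituting back, the coefficient simplifies to $\frac{m(m+1)}{2} - \frac{m(m-1)}{2} = m$, and the sum is congruent to $mA \equiv -2m p^{r-1} B_{p-3} \pmod{p^r}$, as claimed. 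The only non-routine step is the sign bookkeeping in this involution; the binomial counts and the $p$-adic truncation $\frac{1}{i} \equiv \frac{1}{i'} \pmod{p^r}$ are mechanical.
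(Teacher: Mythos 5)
Your argument is correct. Note that the paper itself gives no proof of this lemma: it is quoted verbatim from the reference [WC], so there is no in-paper argument to compare against, and what you have written is a genuine self-contained reduction of the general-$m$ statement to the $m=1$ case \eqref{eq:2}. All the steps check out: writing $i=a_ip^r+i'$ with $i'\in\{1,\dots,p^r-1\}$, $p\nmid i'$, $a_i\in\{0,\dots,m-1\}$ gives $\tfrac{1}{ijk}\equiv\tfrac{1}{i'j'k'}\pmod{p^r}$; the residue sum $i'+j'+k'$ can only be $p^r$ or $2p^r$; the carry counts $\binom{m+1}{2}$ and $\binom{m}{2}$ are right because the upper bound $a_i\le m-1$ is never active when the total is $m-1$ or $m-2$; and the involution $i'\mapsto p^r-i'$ does biject the $2p^r$-triples (all parts at most $p^r-1$) onto the $p^r$-triples and flips the sign of each summand, giving $B\equiv-A$ and the final coefficient $\binom{m+1}{2}-\binom{m}{2}=m$. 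The one point worth stating explicitly in a write-up is that your $B$ is \emph{not} the full sum $\sum_{i+j+k=2p^r,\,i,j,k\in\mathcal{P}_p}\frac{1}{ijk}$ but only its part with all variables below $p^r$ — you have the restriction in the definition, and it is exactly what makes the involution well defined, so just make sure that distinction survives into the final text. This carry-sorting reduction is essentially the standard device used elsewhere in this paper (e.g.\ in the proof of Theorem 2) to pass between moduli $p^r$ and $2p^r$, so it fits the paper's toolkit naturally.
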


\begin{lemma}
 Let $p$ be odd prime and $r,~m$ positive integers, then
 \begin{equation*}
 \sum\limits_{i+j+k=mp^{r}\atop{i,j,k\in \mathcal{P}_{p}}}\frac{1}{ijk}=\frac{6}{mp^{r}}\sum\limits_{1\leq j< l\leq
mp^{r}\atop{j,~l,~l-j\in \mathcal{P}_{p}}}\frac{1}{jl}.
\end{equation*}
\end{lemma}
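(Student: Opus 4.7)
The plan is to rewrite both sides by a combination of partial fractions and index substitutions. The underlying idea is that the triple sum on the left is fully symmetric in $i,j,k$, while the pair sum on the right effectively encodes the same information after one of the three variables is eliminated using the sum constraint. Crucially, the divisibility $p\mid mp^r$ is what allows the coprimality conditions to survive each substitution.

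First, I would apply the elementary partial fraction identity
\begin{equation*}
\frac{1}{ijk}=\frac{1}{i+j+k}\left(\frac{1}{ij}+\frac{1}{jk}+\frac{1}{ki}\right),
\end{equation*}
which is valid for any nonzero $i,j,k$. Since $i+j+k=mp^{r}$ throughout the index set, summing and exploiting the $S_{3}$-symmetry of the constraints yields
\begin{equation*}
\sum_{\substack{i+j+k=mp^{r}\\ i,j,k\in\mathcal{P}_p}}\frac{1}{ijk}=\frac{3}{mp^{r}}\sum_{\substack{i+j+k=mp^{r}\\ i,j,k\in\mathcal{P}_p}}\frac{1}{ij}.
\end{equation*}

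Second, I would eliminate $k$. Writing $k=mp^{r}-i-j$ and using $p\mid mp^{r}$, the condition $k\in\mathcal{P}_p$ is equivalent to $i+j\in\mathcal{P}_p$. The sum therefore becomes $\sum_{1\le i,j,\ i+j\le mp^{r}-1,\ i,j,i+j\in\mathcal{P}_p}\frac{1}{ij}$. The substitution $l=i+j$ is then a bijection between ordered pairs $(i,j)$ and pairs $(j,l)$ with $1\le j<l$, preserving all coprimality constraints, so the sum transforms into $\sum_{1\le j<l\le mp^{r},\ j,l,l-j\in\mathcal{P}_p}\frac{1}{j(l-j)}$ (the case $l=mp^r$ is excluded automatically by $l\in\mathcal{P}_p$).

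Third, I would apply the partial fraction identity
\begin{equation*}
\frac{1}{j(l-j)}=\frac{1}{l}\left(\frac{1}{j}+\frac{1}{l-j}\right),
\end{equation*}
and use the involution $j\mapsto l-j$, which preserves the index set $\{j,l,l-j\in\mathcal{P}_p,\ 1\le j<l\}$, to show that the two resulting pieces $\sum\frac{1}{jl}$ and $\sum\frac{1}{l(l-j)}$ are equal. This gives $\sum\frac{1}{j(l-j)}=2\sum\frac{1}{jl}$, and chaining with the factor $3/(mp^{r})$ produces the claimed $6/(mp^{r})$. The main subtle point is tracking the equivalence $mp^{r}-i-j\in\mathcal{P}_p\iff i+j\in\mathcal{P}_p$, which is the only place where the hypothesis $p\mid mp^{r}$ intervenes and is what makes the relabeling $l=i+j$ preserve the index set cleanly.
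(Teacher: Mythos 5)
Your proposal is correct and follows essentially the same route as the paper: the identity $\frac{1}{ijk}=\frac{1}{i+j+k}\bigl(\frac{1}{ij}+\frac{1}{jk}+\frac{1}{ki}\bigr)$ with $S_3$-symmetry to get the factor $3/(mp^r)$, then the substitution $l=i+j$ and the splitting $\frac{1}{j(l-j)}=\frac{1}{l}\bigl(\frac{1}{j}+\frac{1}{l-j}\bigr)$ with the symmetry $j\leftrightarrow l-j$ to get the remaining factor $2$. Your write-up is in fact more careful than the paper's (which has a typo, announcing $l=j+k$ where $l=i+j$ is meant), particularly in tracking that $mp^r-i-j\in\mathcal{P}_p\iff i+j\in\mathcal{P}_p$ and that $l=mp^r$ is automatically excluded.
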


\begin{proof}
It is easy to see that
\begin{align}
 \sum\limits_{i+j+k=mp^{r}\atop{i,j,k\in \mathcal{P}_{p}}}\frac{1}{ijk}
 =\frac{1}{mp^{r}}\sum\limits_{i+j+k=mp^{r}\atop{i,j,k\in \mathcal{P}_{p}}}\frac{i+j+k}{ijk}
=\frac{3}{mp^{r}}\sum\limits_{i+j+k=mp^{r}\atop{i,j,k\in
\mathcal{P}_{p}}}\frac{1}{ij}.\nonumber
\end{align}
 Let $l=j+k$, then $1\leq j< l\leq mp^{r}$ and $j,~l,~l-j\in
\mathcal{P}_{p}$. By symmetry, we have
\begin{align}
 \frac{3}{mp^{r}}\sum\limits_{i+j+k=mp^{r}\atop{i,j,k\in
\mathcal{P}_{p}}}\frac{1}{ij}=\frac{3}{mp^{r}}\sum\limits_{i+j<
mp^{r}\atop{i,~j,~l\in \mathcal{P}_{p}}}\frac{1}{l}\frac{(i+j)}{ij}
=\frac{6}{mp^{r}}\sum\limits_{1\leq j< l\leq
mp^{r}\atop{j,~l,~l-j\in \mathcal{P}_{p}}}\frac{1}{jl}. \nonumber
\end{align}
This completes the proof of Lemma 2.
\end{proof}
\begin{lemma}
 Let $p>4$ be a prime and $r,~m$ positive integers, then
 \begin{equation*}
 \sum\limits_{i_{1}+i_{2}+i_{3}+i_{4}=mp^{r}\atop{i_{1},~i_{2},~i_{3},~i_{4}\in \mathcal{P}_{p}}}\frac{1}{i_{1}i_{2}i_{3}i_{4}}=\frac{24}{mp^{r}}\sum\limits_{1\leq u_{1}< u_{2}< u_{3}\leq
mp^{r}\atop{u_{1},~u_{3},~u_{2}-u_{1},~u_{3}-u_{2}\in \mathcal{P}_{p}}}\frac{1}{u_{1}u_{2}u_{3}}.
\end{equation*}
\end{lemma}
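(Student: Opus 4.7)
The plan is to mimic the derivation of Lemma 2 but iterate it one more time, so that the 4-tuple sum collapses onto a sum over increasing triples of partial sums. The bijection I have in mind is $(i_1,i_2,i_3,i_4) \leftrightarrow (u_1,u_2,u_3)$ with $u_1 = i_1$, $u_2 = i_1+i_2$, $u_3 = i_1+i_2+i_3$; then $i_2 = u_2-u_1$, $i_3 = u_3-u_2$, $i_4 = mp^r-u_3$, and the constraints $i_k \in \mathcal{P}_p$ translate exactly to $u_1,\,u_2-u_1,\,u_3-u_2,\,mp^r-u_3 \in \mathcal{P}_p$. Since $p\mid mp^r$, the last of these is the same as $u_3 \in \mathcal{P}_p$, and no condition is placed on $u_2$ itself, which matches the statement of the lemma.

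The derivation proceeds in three telescoping moves. First, using $i_1+\cdots+i_4 = mp^r$ and the symmetry of the summand,
$\sum \tfrac{1}{i_1 i_2 i_3 i_4} = \tfrac{1}{mp^r}\sum \tfrac{i_1+i_2+i_3+i_4}{i_1 i_2 i_3 i_4} = \tfrac{4}{mp^r} \sum \tfrac{1}{i_1 i_2 i_3}.$
Second, I stratify by $s := i_1+i_2+i_3 = mp^r - i_4$, which forces $s \in \mathcal{P}_p$, and apply the same trick inside each stratum: $\sum_{i_1+i_2+i_3=s}\tfrac{1}{i_1 i_2 i_3} = \tfrac{3}{s} \sum \tfrac{1}{i_1 i_2}$. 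Third --- and this is the essential new ingredient --- I run the inner step behind Lemma 2 with $s$ (an arbitrary element of $\mathcal{P}_p$, not necessarily a multiple of $p^r$) in place of $mp^r$. Setting $j = i_1$ and $l = i_1+i_2$, the partial-fraction identity $\tfrac{1}{j(l-j)} = \tfrac{1}{l}\bigl(\tfrac{1}{j}+\tfrac{1}{l-j}\bigr)$ together with the involution $j \mapsto l-j$, which preserves each of the three coprimality conditions $j,\,l-j,\,s-l \in \mathcal{P}_p$, gives
$\sum_{i_1+i_2+i_3=s,\,i_k\in\mathcal{P}_p} \tfrac{1}{i_1 i_2} = 2 \sum_{1 \le j < l \le s-1,\ j,\,l-j,\,s-l \in \mathcal{P}_p} \tfrac{1}{jl}.$
Just as in Lemma 2, no coprimality condition on $l$ itself is produced.

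Combining the three moves yields the overall prefactor $\tfrac{4\cdot 3\cdot 2}{mp^r} = \tfrac{24}{mp^r}$, and swapping the outer two summations into a single triple sum with $j < l < s$ and relabeling $u_1=j$, $u_2=l$, $u_3=s$ produces the desired expression. The main bookkeeping point --- and the only place where any genuine care is needed --- is checking (i) that the intermediate upper bound $s \le mp^r-1$ can be recorded as the stated $u_3 \le mp^r$, which holds because $u_3 \in \mathcal{P}_p$ together with $p \mid mp^r$ force $u_3 \ne mp^r$, and (ii) that no spurious coprimality condition on $u_2$ has crept in at any stage. Since every manipulation is a formal identity of rational numbers, I anticipate no further obstacle.
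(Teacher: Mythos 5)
Your proof is correct and is essentially the argument the paper intends: its own ``proof'' of Lemma 3 is only the remark that it is similar to Lemma 2, and your three telescoping steps (yielding the prefactor $4\cdot 3\cdot 2=24$, with the correct bookkeeping that the surviving coprimality conditions are on $u_1$, $u_2-u_1$, $u_3-u_2$ and $u_3$ but not on $u_2$, and that $u_3\le mp^r$ is harmless since $u_3\in\mathcal{P}_p$) are exactly that iteration carried out in full.
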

\begin{proof}
The proof of Lemma 3 is similar to the proof of Lemma 2.
\end{proof}
\begin{lemma}[\cite{ZC}]
 Let $r,~\alpha_{1},~\cdots,~\alpha_{n}$ be positive integers, $r=\alpha_{1}+\cdots+\alpha_{n}\leq p-3$, then
 \begin{equation*}
 \sum\limits_{1\leq l_{1},~\cdots,~l_{n}\leq p-1\atop{l_{i}\neq l_{j},~\forall i\neq j}}\frac{1}{l_{1}^{\alpha_{1}}l_{2}^{\alpha_{2}}\cdots l_{n}^{\alpha_{n}}}\equiv\begin{cases}(-)^{n}(n-1)!\frac{r(r+1)}{2(r+2)}B_{p-r-2}p^{2} ~(\bmod ~ p^{3})~~~ if~ 2\nmid r,\\
 (-)^{n-1}(n-1)!\frac{r}{r+1}B_{p-r-1}p ~(\bmod ~ p^{2}) ~~~if~ 2\mid r.\\
 \end{cases}.
\end{equation*}
\end{lemma}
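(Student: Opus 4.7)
The plan is to reduce the distinct-index sum to products of the single-index power sums $H_k(p) := \sum_{j=1}^{p-1} j^{-k}$ via Möbius inversion on the partition lattice of $\{1,\ldots,n\}$, and then apply Glaisher's classical congruences for $H_k(p)$. For any set partition $\pi$ of $\{1,\ldots,n\}$ with blocks $B$, set $\alpha_B := \sum_{i \in B}\alpha_i$; identifying indices inside each block yields $\prod_{B \in \pi} H_{\alpha_B}(p) = \sum_{\sigma \geq \pi} S_\sigma$, where $S_\sigma$ denotes the sub-sum over those $(l_1,\ldots,l_n)$ satisfying $l_i = l_j$ exactly when $i,j$ lie in the same block of $\sigma$. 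Standard Möbius inversion in the partition lattice, using $\mu(\hat{0},\pi) = \prod_{B}(-1)^{|B|-1}(|B|-1)!$, then gives
\begin{equation*}
\sum_{\substack{1 \leq l_1,\ldots,l_n \leq p-1 \\ l_i \neq l_j,\,\forall i \neq j}} \frac{1}{l_1^{\alpha_1}\cdots l_n^{\alpha_n}} = \sum_{\pi}\prod_{B \in \pi}(-1)^{|B|-1}(|B|-1)!\, H_{\alpha_B}(p).
\end{equation*}

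The second ingredient is the well-known refinement of Glaisher: for $1 \leq k \leq p-3$ one has $H_k(p) \equiv -\tfrac{k(k+1)}{2(k+2)} B_{p-k-2}\, p^2 \pmod{p^3}$ when $k$ is odd, and $H_k(p) \equiv \tfrac{k}{k+1} B_{p-k-1}\, p \pmod{p^2}$ when $k$ is even. These follow from writing $2H_k(p) = \sum_{j=1}^{p-1}(j^{-k} + (p-j)^{-k})$, binomially expanding in $p$, and reducing to the power sums $\sum_{j=1}^{p-1} j^m$ controlled by Bernoulli numbers; the $n=1$ case of the target lemma is precisely this congruence.

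The rest of the proof is a $p$-adic valuation count inside the Möbius sum. Each factor $H_{\alpha_B}(p)$ is divisible by $p$ (since $\alpha_B \leq r \leq p-3 < p-1$), and by an extra $p$ when $\alpha_B$ is odd. Thus a partition $\pi$ with $s$ blocks contributes a term of $p$-adic valuation at least $s + \#\{B:\alpha_B\text{ odd}\}$. If $r$ is even and $s \geq 2$, this valuation is $\geq 2$, so the term vanishes mod $p^2$; only the one-block partition $\pi = \{\{1,\ldots,n\}\}$ survives and yields $(-1)^{n-1}(n-1)!\, H_r(p) \equiv (-1)^{n-1}(n-1)! \tfrac{r}{r+1} B_{p-r-1}\, p \pmod{p^2}$. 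If $r$ is odd and $s \geq 2$, then since $\sum_B \alpha_B = r$ is odd, an odd number (in particular at least one) of the $\alpha_B$ must be odd, so the valuation is $\geq s+1 \geq 3$ and the term vanishes mod $p^3$; again only the trivial one-block term survives, giving $(-1)^{n-1}(n-1)!\, H_r(p) \equiv (-1)^n(n-1)! \tfrac{r(r+1)}{2(r+2)} B_{p-r-2}\, p^2 \pmod{p^3}$.

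The principal obstacle is establishing the sharp mod-$p^3$ congruence for the odd-index power sum $H_k(p)$, which is strictly stronger than the crude bound $v_p(H_k) \geq 2$ that follows from pairing $j \leftrightarrow p-j$ alone; one must carry out a careful second-order expansion and invoke the mod-$p^2$ congruence for $H_{k+1}(p)$ (which has even index) to extract the exact $B_{p-k-2}$ coefficient. Once this is in place, the Möbius-inversion identity and the parity-based valuation accounting are routine, and the cases split cleanly as indicated.
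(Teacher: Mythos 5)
Your proposal is correct, and it is worth noting that the paper itself supplies no proof of this lemma at all: it is quoted from Zhou and Cai \cite{ZC}. In that source the reduction to single power sums $H_k(p)=\sum_{j=1}^{p-1}j^{-k}$ is carried out by induction on $n$, peeling off one variable at a time via the inclusion--exclusion identity that relates the $n$-fold distinct sum to $(n-1)$-fold distinct sums multiplied by $H_{\alpha_n}$; your partition-lattice M\"obius inversion with $\mu(\hat0,\pi)=\prod_B(-1)^{|B|-1}(|B|-1)!$ is exactly the closed form of that recursion, so the two arguments have the same mathematical content but yours packages the combinatorics in one step and makes the $p$-adic bookkeeping transparent: a partition with $s$ blocks contributes valuation at least $s+\#\{B:\alpha_B\ \text{odd}\}$, and the parity of $r$ forces every non-trivial partition to die modulo $p^2$ (resp.\ $p^3$), leaving only the one-block term $(-1)^{n-1}(n-1)!\,H_r(p)$. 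Your valuation count is sound because $\alpha_B\leq r\leq p-3$ guarantees $p-1\nmid\alpha_B$ (so each $H_{\alpha_B}$ is genuinely divisible by $p$, with the relevant Bernoulli numbers $p$-integral by von Staudt--Clausen), and the sharp odd-index congruence is only ever needed at $k\leq p-4$, which holds automatically since $p-3$ is even. You also correctly identify the crux: the mod-$p^3$ statement for odd $k$ does not follow from the pairing $j\leftrightarrow p-j$ alone but from the second-order expansion $2H_k\equiv -kpH_{k+1}-\binom{k+1}{2}p^2H_{k+2}\pmod{p^3}$ combined with the even-index congruence for $H_{k+1}$, which reproduces the coefficient $-\frac{k(k+1)}{2(k+2)}B_{p-k-2}$ exactly; with that in place your signs and constants match the statement in both parity cases.
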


\begin{lemma}[\cite{W}]
 Let $p$ be odd prime, and $\alpha_{1},~\cdots,~\alpha_{n}$ positive integers, where $r=\alpha_{1}+\cdots+\alpha_{n}\leq p-3$, then
 \begin{equation*}
 \sum\limits_{1\leq l_{1},~\cdots,~l_{n}\leq 2p\atop{l_{i}\neq l_{j},~l_{i}\in \mathcal{P}_{p}}}\frac{1}{l_{1}^{\alpha_{1}}l_{2}^{\alpha_{2}}\cdots l_{n}^{\alpha_{n}}}\equiv\begin{cases}(-)^{n}(n-1)!\frac{2r(r+1)}{r+2}B_{p-r-2}p^{2} ~(\bmod ~ p^{3})~~~ if~ 2\nmid r,\\
 (-)^{n-1}(n-1)!\frac{2r}{r+1}B_{p-r-1}p ~(\bmod ~ p^{2}) ~~~if~ 2\mid r.\\
 \end{cases}.
\end{equation*}
\end{lemma}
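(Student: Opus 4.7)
The strategy is to reduce Lemma~5 to Lemma~4 by decomposing the index range $\{1,\dots,2p\}\cap\mathcal{P}_{p}=A\sqcup B$ with $A=\{1,\dots,p-1\}$ and $B=\{p+1,\dots,2p-1\}$, and then expanding in powers of $p$. For each subset $S\subseteq\{1,\dots,n\}$, restrict attention to tuples with $l_{i}\in A$ when $i\notin S$ and $l_{i}=p+\ell_{i}\in B$ when $i\in S$. Because $p+\ell_{i}>\ell_{j}$ for any $\ell_{i},\ell_{j}\in[1,p-1]$, the distinctness condition $l_{i}\ne l_{j}$ is automatic between the two groups, so the partial sum $T_{S}$ factors as $U_{S^{c}}\cdot V_{S}$, where
\[
U_{I}:=\sum_{\substack{\ell_{i}\in[1,p-1],\,i\in I\\ \text{distinct}}}\prod_{i\in I}\frac{1}{\ell_{i}^{\alpha_{i}}},\qquad V_{I}:=\sum_{\substack{\ell_{i}\in[1,p-1],\,i\in I\\ \text{distinct}}}\prod_{i\in I}\frac{1}{(p+\ell_{i})^{\alpha_{i}}}.
\]
Note that $U_{I}$ is precisely the sum of Lemma~4 restricted to the indices in $I$, so the full Lemma~5 sum becomes $T=\sum_{S}U_{S^{c}}V_{S}$.

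Expand $(p+\ell)^{-\alpha}\equiv\ell^{-\alpha}-\alpha p\,\ell^{-\alpha-1}+\binom{\alpha+1}{2}p^{2}\ell^{-\alpha-2}\pmod{p^{3}}$ and multiply out, writing each $V_{S}$ as a linear combination of $U_{S}$-type sums whose exponents have been shifted upward by $1$ or $2$. By Lemma~4, a $U_{I}$ with positive total exponent $r_{I}$ has $p$-adic order $p$ if $r_{I}$ is even and $p^{2}$ if $r_{I}$ is odd. A parity-by-parity check then shows that for every $S$ with $1\leq|S|\leq n-1$ (so $|S|,|S^{c}|\geq 1$), the product $U_{S^{c}}\cdot V_{S}$ has order at least $p^{2}$ when $r$ is even and at least $p^{3}$ when $r$ is odd; these subsets contribute nothing to the required precision. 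Only $S=\emptyset$ and $S=\{1,\dots,n\}$ survive, giving $T\equiv U+V$ modulo $p^{2}$ (even $r$) or $p^{3}$ (odd $r$), where $U:=U_{\{1,\dots,n\}}$ and $V:=V_{\{1,\dots,n\}}$.

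It remains to evaluate $V$. The $p^{2}$-terms in its expansion produce $U$-sums of total $r+2$; when $r$ is odd, these are $O(p^{2})$ by Lemma~4 and hence contribute $O(p^{4})$, so $V\equiv U-p\sum_{j=1}^{n}\alpha_{j}\,U(\alpha_{j}\!\to\!\alpha_{j}+1)$ to the required precision. If $r$ is even, each shifted sum has odd total $r+1$ and is $O(p^{2})$, so the correction is $O(p^{3})$ and $V\equiv U\pmod{p^{2}}$; hence $T\equiv 2U\equiv(-1)^{n-1}(n-1)!\,\frac{2r}{r+1}B_{p-r-1}\,p\pmod{p^{2}}$ by Lemma~4. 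If $r$ is odd, each shifted sum has even total $r+1$ and Lemma~4 gives $U(\alpha_{j}\!\to\!\alpha_{j}+1)\equiv(-1)^{n-1}(n-1)!\,\frac{r+1}{r+2}B_{p-r-2}\,p\pmod{p^{2}}$, which crucially depends only on the total exponent and not on $j$; weighting by $\alpha_{j}$ and using $\sum\alpha_{j}=r$ produces the correction $(-1)^{n-1}(n-1)!\,\frac{r(r+1)}{r+2}B_{p-r-2}\,p^{2}\pmod{p^{3}}$. Combining with $U\equiv(-1)^{n}(n-1)!\,\frac{r(r+1)}{2(r+2)}B_{p-r-2}\,p^{2}\pmod{p^{3}}$ yields $V$, and hence $T=U+V\equiv(-1)^{n}(n-1)!\,\frac{2r(r+1)}{r+2}B_{p-r-2}\,p^{2}\pmod{p^{3}}$, as claimed.

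The main obstacle is the $p$-adic order bookkeeping used to discard the intermediate subsets $S$: each of the four parity combinations of $(r_{S},r_{S^{c}})$ must be treated separately, and every term in the $p$-expansion of $V_{S}$ tracked, with the argument hinging on the ``extra'' factor of $p^{2}$ that Lemma~4 provides for odd-total sums. A secondary technicality is ensuring that every appeal to Lemma~4 respects its hypothesis $r\leq p-3$: the shift-by-$1$ sums have total $r+1\leq p-3$ because $r$ odd forces $r\leq p-4$, and the shift-by-$2$ sums (total $r+2$) are covered by Lemma~4 when $r\leq p-5$, while the boundary case $r=p-4$ requires a direct Fermat-type argument to show divisibility by $p$ of the relevant sum.
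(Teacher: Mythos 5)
The paper offers no proof of this lemma to compare against: it is imported verbatim from Wang \cite{W}. Judged on its own, your derivation from Lemma 4 is essentially sound. Splitting $\{1,\dots,2p\}\cap\mathcal{P}_{p}$ into $[1,p-1]$ and $[p+1,2p-1]$, factoring each partial sum as $U_{S^{c}}V_{S}$, killing the mixed subsets by the parity bookkeeping you describe, and expanding $V=V_{\{1,\dots,n\}}$ in powers of $p$ does reproduce both cases: the factor $2$ for even $r$ from $T\equiv 2U$, and the factor $4$ for odd $r$ from $T\equiv 2U-p\sum_{j}\alpha_{j}U(\alpha_{j}\!\to\!\alpha_{j}+1)$, since the subtracted term equals $-2U$ to the stated precision. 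Two points need tightening. First, in the odd case you call $(-1)^{n-1}(n-1)!\frac{r(r+1)}{r+2}B_{p-r-2}p^{2}$ ``the correction'': that quantity is the value of $p\sum_{j}\alpha_{j}U(\alpha_{j}\!\to\!\alpha_{j}+1)$, which enters $V$ with a minus sign according to your own displayed formula; your final congruence is right, but as worded the sign of the correction contradicts that display (with your stated sign one would get $V\equiv-U$ and $T\equiv 0$). Second, your boundary discussion is incomplete: besides odd $r=p-4$ (shift-by-two sums of total $p-2$), the even value $r=p-3$ is admissible, and there the shift-by-one sums already have total $p-2>p-3$, outside the hypothesis of Lemma 4, yet you need them to vanish mod $p$ to conclude $V\equiv U\pmod{p^{2}}$. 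The same power-sum argument you invoke (a product of power sums $\sum_{\ell=1}^{p-1}\ell^{k}$ survives mod $p$ only when every block total is divisible by $p-1$, which is impossible when the grand total is $p-2$) covers this case, but it must be stated.
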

\begin{lemma}
 Let $p>4$ be a prime, then
 \begin{equation*}
 \sum\limits_{i_{1}+i_{2}+i_{3}+i_{4}=2p\atop{i_{1},~i_{2},~i_{3},~i_{4}\in \mathcal{P}_{p}}}\frac{1}{i_{1}i_{2}i_{3}i_{4}}\equiv-\frac{240}{5}pB_{p-5} ~(\bmod  ~p^{2}).
\end{equation*}
\end{lemma}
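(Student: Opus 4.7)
The plan is to start by applying Lemma~3 with $m=2$, $r=1$, which rewrites
\[
\sum_{\substack{i_1+i_2+i_3+i_4=2p\\ i_j\in\mathcal{P}_p}}\frac{1}{i_1i_2i_3i_4}=\frac{12}{p}\,S,\qquad S:=\sum_{\substack{1\leq u_1<u_2<u_3\leq 2p\\ u_1,u_3,u_2-u_1,u_3-u_2\in\mathcal{P}_p}}\frac{1}{u_1u_2u_3},
\]
so that the claim becomes equivalent to $S\equiv -4\,p^2B_{p-5}\pmod{p^3}$.

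Next I would compare $S$ with the ``all-$\mathcal{P}_p$'' sum
\[
T:=\sum_{\substack{1\leq u_1<u_2<u_3\leq 2p\\ u_1,u_2,u_3\in\mathcal{P}_p}}\frac{1}{u_1u_2u_3}.
\]
Because $u_3\leq 2p$ forces $u_2\leq 2p-1$, the only way $p\mid u_2$ is $u_2=p$; and the conditions $p\nmid(u_2-u_1)$, $p\nmid(u_3-u_2)$ are equivalent to $u_2\neq u_1+p$ and $u_3\neq u_2+p$. A quick case check shows the three exceptional events are pairwise incompatible. Hence $S=T+A-C_1-C_2$, where $A$ comes from $u_2=p$ and $C_1$, $C_2$ from $u_2=u_1+p$ and $u_3=u_2+p$, respectively. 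Lemma~5 applied with $n=3$, $\alpha_i=1$ (so $r=3$ is odd), divided by $3!$, provides the main-term estimate $T\equiv -\tfrac{8}{5}\,p^2B_{p-5}\pmod{p^3}$, reducing the task to
\[
A-C_1-C_2\equiv -\tfrac{12}{5}\,p^2B_{p-5}\pmod{p^3}.
\]

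I would evaluate the three correction sums by expanding
\[
\frac{1}{p+v}\equiv\frac{1}{v}-\frac{p}{v^2}+\frac{p^2}{v^3},\qquad \frac{1}{u(u+p)}\equiv\frac{1}{u^2}-\frac{p}{u^3}+\frac{p^2}{u^4}\pmod{p^3}.
\]
This turns every summand into a polynomial in $p$ whose coefficients are nested sums of $\frac{1}{u^a v^b}$ over partial ranges of $\{1,\dots,p-1\}$. Swapping orders of summation and using the symmetrization identities $\sum_{u\neq v}\frac{1}{u^a v^b}=H_{p-1}^{(a)}H_{p-1}^{(b)}-H_{p-1}^{(a+b)}$ and $2\sum_{u<v}\frac{1}{uv}=H_{p-1}^2-H_{p-1}^{(2)}$ allows one to replace the partial-range sums by multi-index sums over the full range, whose values modulo $p^3$ are supplied by Lemma~4. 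The classical special cases
\[
H_{p-1}\equiv-\tfrac{p^2}{3}B_{p-3},\quad H_{p-1}^{(2)}\equiv\tfrac{2p}{3}B_{p-3},\quad H_{p-1}^{(3)}\equiv-\tfrac{6}{5}\,p^2 B_{p-5}\pmod{p^3}
\]
immediately yield $A\equiv 0\pmod{p^3}$, since the prefactor $\frac{1}{p}H_{p-1}$ in $A$ is already $O(p)$ and the inner sum $\sum_v\frac{1}{p+v}$ is $O(p^2)$.

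The main obstacle will be the bookkeeping for $C_1+C_2$: the expansions produce several apparent $B_{p-3}$ contributions from products of $H_{p-1}$ with partial sums of $\frac{1}{v^2}$, and I expect these to cancel modulo $p^3$ once $C_1$ and $C_2$ are added, leaving only the $B_{p-5}$-piece coming from $H_{p-1}^{(3)}$ together with the $n=2$, $r=3$ instance of Lemma~4. Organizing this cancellation carefully will give $C_1+C_2\equiv \tfrac{12}{5}\,p^2 B_{p-5}\pmod{p^3}$, completing the proof.
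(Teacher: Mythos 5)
Your plan is correct and follows essentially the same route as the paper's proof: reduce via Lemma 3 to $S=\sum 1/(u_1u_2u_3)$, split off the exceptional configurations $u_2=p$, $u_2-u_1=p$, $u_3-u_2=p$, evaluate the main term $T\equiv-\tfrac{8}{5}p^2B_{p-5}$ by Lemma 5, show $A\equiv 0\pmod{p^3}$, and handle $C_1,C_2$ by the geometric expansion of $\tfrac{1}{u+p}$ followed by symmetrization and Lemma 4. The deferred bookkeeping does close exactly as you predict: after symmetrizing, the weight-$3$ piece $\sum_{u_1\neq u_2}1/(u_1^2u_2)$ and the weight-$4$ piece each contribute $\tfrac{6}{5}p^2B_{p-5}$ (no $B_{p-3}$ terms actually survive, since every term Lemma 4 is applied to has odd or even weight yielding only $B_{p-5}$ at the relevant order), giving $C_1+C_2\equiv\tfrac{12}{5}p^2B_{p-5}$ and hence $S\equiv-4p^2B_{p-5}\pmod{p^3}$ as required.
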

\begin{proof}
By Lemma 2, we have
\begin{equation}
 \sum\limits_{i_{1}+i_{2}+i_{3}+i_{4}=2p\atop{i_{1},~i_{2},~i_{3},~i_{4}\in \mathcal{P}_{p}}}\frac{1}{i_{1}i_{2}i_{3}i_{4}}=\frac{24}{2p}\sum\limits_{1\leq u_{1}< u_{2}< u_{3}\leq
2p\atop{u_{1},~u_{3},~u_{2}-u_{1},~u_{3}-u_{2}\in \mathcal{P}_{p}}}\frac{1}{u_{1}u_{2}u_{3}}.\label{eq:15}
\end{equation}
It is easy to see that
\begin{equation*}
 \sum\limits_{1\leq u_{1}< u_{2}< u_{3}\leq
2p\atop{u_{1},~u_{3},~u_{2}-u_{1},~u_{3}-u_{2}\in \mathcal{P}_{p}}}\frac{1}{u_{1}u_{2}u_{3}}
=\sum\limits_{1\leq u_{1}< u_{2}< u_{3}\leq
2p\atop{u_{1},~u_{2},~u_{3},~u_{2}-u_{1},~u_{3}-u_{2}\in \mathcal{P}_{p}}}\frac{1}{u_{1}u_{2}u_{3}}+\sum\limits_{1\leq u_{1}< p< u_{3}\leq
2p\atop{u_{1},~u_{3},\in \mathcal{P}_{p}}}\frac{1}{u_{1}pu_{3}}.
\end{equation*}
By Lemma 4, we have
\begin{equation}
 \sum\limits_{1\leq u_{1}< p< u_{3}\leq
2p\atop{u_{1},~u_{3},\in \mathcal{P}_{p}}}\frac{1}{u_{1}pu_{3}}=\frac{1}{p}\sum\limits_{1\leq u_{1}< p}\frac{1}{u_{1}}\sum\limits_{p< u_{3}<2p}\frac{1}{u_{3}}
\equiv 0\pmod{p^{3}}.
\end{equation}
Hence
\begin{align}
 &\sum\limits_{1\leq u_{1}< u_{2}< u_{3}\leq
2p\atop{u_{1},~u_{3},~u_{2}-u_{1},~u_{3}-u_{2}\in \mathcal{P}_{p}}}\frac{1}{u_{1}u_{2}u_{3}}
\equiv \sum\limits_{1\leq u_{1}< u_{2}< u_{3}\leq
2p\atop{u_{1},~u_{2},~u_{3},~u_{2}-u_{1},~u_{3}-u_{2}\in \mathcal{P}_{p}}}\frac{1}{u_{1}u_{2}u_{3}}
\nonumber\\&\equiv\sum\limits_{1\leq u_{1}< u_{2}< u_{3}\leq
2p\atop{u_{1},~u_{2},~u_{3}\in \mathcal{P}_{p}}}\frac{1}{u_{1}u_{2}u_{3}}
-\sum\limits_{1\leq u_{1}< u_{1}+p< u_{3}\leq
2p\atop{u_{1},~u_{3}\in \mathcal{P}_{p}}}\frac{1}{u_{1}(u_{1}+p)u_{3}}
\nonumber\\&-\sum\limits_{1\leq u_{1}< u_{2}< u_{2}+p\leq
2p\atop{u_{1},~u_{2}\in \mathcal{P}_{p}}}\frac{1}{u_{1}u_{2}(u_{2}+p)}\pmod{p^{3}}.\nonumber
\end{align}
Replace $u_{3}=u_{2}+p$, then
\begin{align}
 \sum\limits_{1\leq u_{1}< u_{1}+p< u_{3}\leq
2p\atop{u_{1},~u_{3}\in \mathcal{P}_{p}}}\frac{1}{u_{1}(u_{1}+p)u_{3}}
&=\sum\limits_{1\leq u_{1}< u_{1}+p< u_{2}+p\leq
2p\atop{u_{1},~u_{2}\in \mathcal{P}_{p}}}\frac{1}{u_{1}(u_{1}+p)(u_{2}+p)}
\nonumber\\&\equiv\sum\limits_{1\leq u_{1}< u_{2}<p}\frac{1}{u_{1}^{2}u_{2}}(1-\frac{p}{u_{1}}+\frac{p^{2}}{u_{1}^{2}})(1-\frac{p}{u_{2}}+\frac{p^{2}}{u_{2}^{2}})\pmod{p^{3}}.
\nonumber
\end{align}
and
\begin{align}
\sum\limits_{1\leq u_{1}< u_{2}< u_{2}+p\leq
2p\atop{u_{1},~u_{2}\in \mathcal{P}_{p}}}\frac{1}{u_{1}u_{2}(u_{2}+p)}
\equiv\sum\limits_{1\leq u_{1}< u_{2}<p}\frac{1}{u_{1}u_{2}^{2}}(1-\frac{p}{u_{2}}+\frac{p^{2}}{u_{2}^{2}})\pmod{p^{3}}.\nonumber
\end{align}
Thus
\begin{align}
 \sum\limits_{1\leq u_{1}< u_{2}< u_{3}\leq
2p\atop{u_{1},~u_{3},~u_{2}-u_{1},~u_{3}-u_{2}\in \mathcal{P}_{p}}}\frac{1}{u_{1}u_{2}u_{3}}
&\equiv\sum\limits_{1\leq u_{1}< u_{2}< u_{3}\leq
2p\atop{u_{1},~u_{2},~u_{3}\in \mathcal{P}_{p}}}\frac{1}{u_{1}u_{2}u_{3}}
-\sum\limits_{1\leq u_{1}< u_{2}<p}(\frac{1}{u_{1}^{2}u_{2}}
\nonumber\\&+\frac{1}{u_{1}u_{2}^{2}}
-p(\frac{1}{u_{1}^{3}u_{2}}+\frac{1}{u_{1}^{2}u_{2}^{2}}
+\frac{1}{u_{1}u_{2}^{3}})
\nonumber\\&+p^{2}(\frac{1}{u_{1}^{4}u_{2}}+\frac{1}{u_{1}^{3}u_{2}}+\frac{1}{u_{1}^{2}u_{2}^{3}}+\frac{1}{u_{1}^{1}u_{2}^{4}}))
\nonumber\\&\equiv\frac{1}{3!}\sum\limits_{1\leq u_{1},~ u_{2},~ u_{3}\leq
2p\atop{u_{i}\neq u_{j},~u_{i}\in \mathcal{P}_{p}}}\frac{1}{u_{1}u_{2}u_{3}}
-\sum\limits_{1\leq u_{1}, u_{2}<p}(\frac{1}{u_{1}^{2}u_{2}}
\nonumber\\&-p(\frac{1}{u_{1}^{3}u_{2}}+\frac{1}{2}\frac{1}{u_{1}^{2}u_{2}^{2}})
+p^{2}(\frac{1}{u_{1}^{4}u_{2}}+\frac{1}{u_{1}^{3}u_{2}}))\pmod{p^{3}}.\label{eq:16}
\end{align}
 Using Lemma 5 in the first sum of the right hand in \eqref{eq:16}  and using Lemma 4 in the second sum, we have
 \begin{align}
 \sum\limits_{1\leq u_{1}< u_{2}< u_{3}\leq
2p\atop{u_{1},~u_{3},~u_{2}-u_{1},~u_{3}-u_{2}\in \mathcal{P}_{p}}}\frac{1}{u_{1}u_{2}u_{3}}
&\equiv \frac{1}{3!}(-)^{3}(3-1)!\frac{24}{5}B_{p-5}p^{2}-(-)^{2}\frac{12}{10}B_{p-5}p^{2}
\nonumber\\&+\frac{3p}{2}(-\frac{4}{5}B_{p-5}p)-p^{2}\frac{30}{14}B_{p-7}p^{2}
\nonumber\\&\equiv -\frac{20}{5}p^{2}B_{p-5}\pmod{p^{3}}.\label{eq:17}
\end{align}
Combining \eqref{eq:15} with \eqref{eq:17}, we complete the proof
of Lemma 6.
\end{proof}

\begin{lemma}
 Let $p>4$ be odd prime and $r>1$ positive integer, then
 \begin{equation*}
 \sum\limits_{i_{1}+i_{2}+i_{3}+i_{4}=2p^{r}\atop{i_{1},~i_{2},~i_{3},~i_{4}\in \mathcal{P}_{p}}}\frac{1}{i_{1}i_{2}i_{3}i_{4}}\equiv-\frac{48}{5}p^{r}B_{p-5} ~(\bmod  ~p^{r+1}).
\end{equation*}
\end{lemma}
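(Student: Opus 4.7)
The plan is to adapt the argument of Lemma 6 to the extended range $[1, 2p^r]$, $r > 1$. Applying Lemma 3 with $m = 2$ converts the claim into showing
\[
T := \sum_{\substack{1 \le u_1 < u_2 < u_3 \le 2p^r \\ u_1,\, u_3,\, u_2 - u_1,\, u_3 - u_2 \in \mathcal{P}_p}} \frac{1}{u_1 u_2 u_3} \equiv -\frac{4}{5}\, p^{2r}\, B_{p-5} \pmod{p^{2r+1}},
\]
after which multiplication by $12/p^r$ produces the desired congruence modulo $p^{r+1}$. First I would peel off the contribution $T''$ coming from triples with $p \mid u_2$. In this case the two gap conditions $u_2 - u_1, u_3 - u_2 \in \mathcal{P}_p$ follow automatically from $u_1, u_3 \in \mathcal{P}_p$, and writing $u_2 = pv$ expresses $T''$ as a single sum over $v$ of $(pv)^{-1}$ times a product of two partial harmonic sums restricted to $\mathcal{P}_p$; Wolstenholme-type estimates based on Lemma 4 force $T'' \equiv 0 \pmod{p^{2r+1}}$, exactly as the $u_2 = p$ piece in the proof of Lemma 6 was $\equiv 0 \pmod{p^3}$.

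For the remaining piece $T' = T - T''$, where all three $u_i$ lie in $\mathcal{P}_p$, I would carry out inclusion-exclusion against the constraints $u_2 - u_1 \in \mathcal{P}_p$ and $u_3 - u_2 \in \mathcal{P}_p$. Unlike the $r = 1$ case, the forbidden gaps can now equal any positive multiple $kp$, and the doubly forbidden case is nonempty. For each configuration I substitute $u_2 = u_1 + kp$ (and $u_3 = u_1 + (k+l)p$ where needed), then expand $1/(u_1 + kp)$ as a geometric series in $p/u_1$ truncated modulo $p^{2r+1}$. Each resulting correction reduces to a weighted power sum $\sum 1/(u_1^{\alpha_1} u_3^{\alpha_3})$ over $\mathcal{P}_p$, which is evaluated by Lemma 4 (or an extension thereof to the range $[1, 2p^r]$ obtained by slicing by residue class modulo $p$ and reapplying Lemma 4). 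The main unconstrained term $\tfrac{1}{3!}\sum 1/(u_1 u_2 u_3)$ over distinct $u_i \in \mathcal{P}_p \cap [1, 2p^r]$ is handled by the same residue-class decomposition, yielding an analogue of Lemma 5 for the extended range.

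The main obstacle is combinatorial bookkeeping. The inclusion-exclusion now ranges over all pairs $(k, l)$ with $k, l \ge 1$ and $u_1 + (k+l)p \le 2p^r$, and each such configuration contributes a weighted harmonic sum that must be Taylor-expanded to order $2r$ in $p$. Collating every contribution and verifying that all but the single $-\tfrac{4}{5}\,p^{2r} B_{p-5}$ piece cancels modulo $p^{2r+1}$ demands systematic use of vanishing results for $\sum 1/u^{\alpha}$ over $\mathcal{P}_p \cap [1, p-1]$ and $\mathcal{P}_p \cap [1, 2p^r]$, together with the Bernoulli-number recursion. A secondary technical burden is supplying a clean extended-range analogue of Lemma 5, but this follows by mimicking the proof strategy of the stated lemma rather than requiring any new deep input.
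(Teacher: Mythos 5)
Your plan diverges from the paper at the outset: for Lemma 7 the paper performs no computation at all, but appeals to the method Zhao used to prove the super congruence \eqref{eq:14} in [Zh1], which handles general $r$ by reducing level $p^{r}$ to level $p^{r-1}$ and never needs the auxiliary triple sum to high $p$-adic precision. Your proposal instead tries to push the Lemma 6 computation from $r=1$ to $r>1$, and that is where it breaks. After Lemma 3 you must determine
\begin{equation*}
T=\sum_{\substack{1\le u_1<u_2<u_3\le 2p^{r}\\ u_1,\,u_3,\,u_2-u_1,\,u_3-u_2\in\mathcal{P}_p}}\frac{1}{u_1u_2u_3}\pmod{p^{2r+1}},
\end{equation*}
a modulus that grows with $r$, while the only quantitative inputs available (Lemmas 4 and 5) are fixed congruences modulo $p^{2}$ or $p^{3}$ on the fixed ranges $[1,p-1]$ and $[1,2p]$. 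Slicing $[1,2p^{r}]$ into residue classes and expanding $1/(jp+a)$ does not close this gap: to reach precision $p^{2r+1}$ you must retain powers $(jp/a)^{t}$ up to $t$ of order $r$, each multiplied by power sums $\sum_a a^{-k-t}$ about which Lemma 4 says nothing beyond $p^{3}$; the number of correction terms is unbounded in $r$, and proving that they collapse to $-\tfrac45 p^{2r}B_{p-5}$ is essentially the theorem itself, not a routine preliminary.

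There is also a concrete false step: the claim that the piece $T''$ with $p\mid u_2$ vanishes modulo $p^{2r+1}$ ``exactly as'' in Lemma 6. For $r=1$ there is a single such $u_2=p$ and both factors are full-range sums divisible by $p^{2}$, so the term is $O(p^{3})$. For $r=2$ the single term $u_2=p$ already equals
\begin{equation*}
\frac1p\Bigl(\sum_{a=1}^{p-1}\frac1a\Bigr)\Bigl(\sum_{\substack{p<u\le 2p^{2}\\ u\in\mathcal{P}_p}}\frac1u\Bigr)\equiv-\frac19B_{p-3}^{2}\,p^{3}\pmod{p^{4}},
\end{equation*}
using $\sum_{a<p}a^{-1}\equiv-\tfrac13B_{p-3}p^{2}$ and $\sum_{p<u\le 2p^{2}}'u^{-1}\equiv\tfrac13B_{p-3}p^{2}$; this is generically nonzero modulo the required $p^{5}$ (it vanishes only for Wolstenholme primes). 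Such contributions can only disappear through cancellation across all $u_2=pv$ with $1\le v\le 2p^{r-1}$ and against terms of $T'$, which your outline does not address. To repair the argument you would either have to establish the extended-range Wolstenholme-type congruences modulo $p^{2r+1}$ from scratch, or adopt the recursive method of [Zh1] and [WC] that the paper actually intends.
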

\begin{proof}
The proof of Lemma 7 is similar to the proof method of
\eqref{eq:14} in \cite{Zh1}.
\end{proof}
\begin{lemma}[\cite{W}]
 Let $p>5$ be a prime and $r,~m$ positive integers, $(m,~p)=1$, then\\
 \begin{align}
 &\sum\limits_{i_{1}+i_{2}+\cdots+i_{5}=mp\atop{i_{1},~i_{2},\cdots,~i_{5}\in \mathcal{P}_{p}}}\frac{1}{i_{1}i_{2}i_{3}i_{4}i_{5}}
 \equiv\begin{cases}
-4(5m+m^{3})B_{p-5} ~(\bmod  ~p),~if~r=1,\\
-20mp^{r-1}B_{p-5} ~(\bmod  ~p^{r}),~if~r>1.\\
  \end{cases}\nonumber
\end{align}
\end{lemma}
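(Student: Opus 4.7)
The plan is to adapt the reduce--evaluate technique of Lemmas~3, 6, and~7 to the five-variable setting: express the sum as a combination of multiple harmonic power sums over $\{1,\ldots,mp^{r}\}\cap\mathcal{P}_{p}$, then apply the Zhou--Cai and Wang evaluations (Lemmas~4 and~5). Iterating the identity $\sum_{i_{1}+\cdots+i_{n}=N}\frac{1}{i_{1}\cdots i_{n}}=\frac{n}{N}\sum\frac{1}{i_{1}\cdots i_{n-1}}$ together with the partial-sum substitution $u_{k}=i_{1}+\cdots+i_{k}$ yields the five-variable analogue of Lemmas~2 and~3:
$$\sum_{\substack{i_{1}+\cdots+i_{5}=mp^{r}\\ i_{j}\in\mathcal{P}_{p}}}\frac{1}{i_{1}\cdots i_{5}}=\frac{5!}{mp^{r}}\sum_{\substack{1\leq u_{1}<u_{2}<u_{3}<u_{4}\leq mp^{r}\\ u_{1},\,u_{2}-u_{1},\,u_{3}-u_{2},\,u_{4}-u_{3},\,mp^{r}-u_{4}\in\mathcal{P}_{p}}}\frac{1}{u_{1}u_{2}u_{3}u_{4}}.$$
The side conditions force $u_{1},u_{4}\in\mathcal{P}_{p}$, while $u_{2}$ and $u_{3}$ are still allowed to be multiples of $p$.

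Next, split the inner sum according to whether each of $u_{2},u_{3}$ is coprime to $p$ or equal to some multiple $ap,bp$ with $1\leq a,b\leq mp^{r-1}-1$. When both are coprime to $p$, symmetrise to $\frac{1}{4!}\sum\frac{1}{u_{1}u_{2}u_{3}u_{4}}$ over distinct $u_{j}\in\mathcal{P}_{p}\cap[1,mp^{r}]$ (subtracting the diagonals $u_{i}=u_{j}$ by inclusion--exclusion as lower-weight harmonic sums), and evaluate via Lemmas~4 and~5, with the latter extended routinely from $[1,2p-1]\setminus\{p\}$ to $[1,mp]\setminus p\mathbb{Z}$. When one or both of $u_{2},u_{3}$ is a multiple of $p$, expand $\frac{1}{ap\pm d}=\frac{1}{d}\sum_{k\geq 0}(\mp ap/d)^{k}$ to the required $p$-adic order, exactly as in the derivation of \eqref{eq:16}; the surviving factors collapse into products of single-variable harmonic power sums that are again handled by Lemmas~4 and~5.

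For $r>1$ a cleaner route is Zhao's method from \cite{Zh1}: write $i_{j}=a_{j}p^{r-1}+b_{j}$ with $b_{j}\in[1,p^{r-1}]\cap\mathcal{P}_{p}$ and $a_{j}\geq 0$, expand each $1/i_{j}$ modulo $p^{r}$, and use $\sum i_{j}=mp^{r}$ to fix $\sum b_{j}$ modulo $p^{r-1}$. The leading contribution reduces to $p^{r-1}$ times a five-variable residue sum, multiplied by the count of admissible lifts $(a_{j})$ which is linear in $m$; the higher-order corrections involve odd-weight harmonic sums on $[1,p-1]$ and vanish modulo the required power of $p$ by Lemma~4. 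This recovers the $-20mp^{r-1}B_{p-5}$ congruence directly.

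\textbf{The main obstacle} is the $r=1$ case: recovering the precise cubic polynomial $-4(5m+m^{3})B_{p-5}$ requires careful bookkeeping of the configurations above, because each choice of which of $u_{2},u_{3}$ is a multiple of $p$---and which multiple---contributes a different polynomial in $m$, arising from the enumeration of admissible multiples of $p$ inside $[1,mp]$. One must verify that the quadratic-in-$m$ contributions from the various configurations cancel while the linear and cubic ones combine into $5m+m^{3}$, an analogue of, but more delicate than, the cancellation of the $pB_{p-7}$ terms at the end of the proof of Lemma~6.
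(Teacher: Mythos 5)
The paper offers no proof of this lemma: it is quoted from Wang \cite{W} (where, incidentally, the summation condition should read $i_{1}+\cdots+i_{5}=mp^{r}$ rather than $mp$, as the two cases $r=1$ and $r>1$ make clear). So your proposal can only be judged on its own terms, and on those terms it is an outline with an unresolved core rather than a proof. Your first reduction is exactly Lemma 9 of this paper, and the $r>1$ case via Zhao's lifting method is credible, since modulo $p^{r}$ only the leading term survives and the count of lifts is indeed linear in $m$ there. But for $r=1$ you explicitly leave open the computation that produces the cubic polynomial $-4(5m+m^{3})B_{p-5}$, and that computation is the entire content of the $m$-dependence of the lemma. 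Concretely: reducing each $i_{j}$ modulo $p$ to $a_{j}\in[1,p-1]$ with $a_{1}+\cdots+a_{5}=kp$, $k\in\{1,2,3,4\}$, the number of admissible lifts to $\sum i_{j}=mp$ is $\binom{m-k+4}{4}$, and the symmetry $a_{j}\mapsto p-a_{j}$ pairs $k$ with $5-k$; the coefficient of $B_{p-5}$ then comes out as $\frac{m(m^{2}+1)}{2}S_{1}+\frac{m(m^{2}-1)}{6}S_{2}$, where $S_{k}$ denotes the sum of $1/(a_{1}\cdots a_{5})$ over $a_{1}+\cdots+a_{5}=kp$ with $1\leq a_{j}\leq p-1$. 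Matching the claimed answer forces $S_{2}\equiv 48B_{p-5}\equiv-2S_{1}\pmod{p}$, and evaluating $S_{2}$ is itself a nontrivial five-variable congruence that your plan does not address.

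A second, smaller gap: you invoke Lemma 5 ``extended routinely'' from the range $[1,2p]$ to $[1,mp]$. That extension is a genuine statement requiring proof; the constants in Lemma 5 already differ from those of Lemma 4 by a factor reflecting the doubled range, and for general $m$ the analogous constants are again $m$-dependent, so asserting the extension without proof hides precisely the kind of bookkeeping you yourself identify as the main obstacle. In short, the architecture of your argument is sound and matches the toolkit of this paper (Lemmas 2, 3, 9 and the splitting used in the proof of Lemma 6), but the two places where the polynomial $5m+m^{3}$ must actually be produced are both left as acknowledged gaps, so the proposal does not yet establish the lemma.
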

\begin{lemma}
 Let $p>5$ be a prime and $r,~m$ positive integers, then
 \begin{equation*}
 \sum\limits_{i_{1}+i_{2}+\cdots+i_{5}=mp^{r}\atop{i_{1},~i_{2},~\cdots,~i_{5}\in \mathcal{P}_{p}}}\frac{1}{i_{1}i_{2}i_{3}i_{4}i_{5}}=\frac{120}{mp^{r}}\sum\limits_{1\leq u_{1}< u_{2}< u_{3}< u_{4}\leq
mp^{r}\atop{u_{1},~u_{4},~u_{2}-u_{1},~u_{3}-u_{2},~u_{4}-u_{3}\in \mathcal{P}_{p}}}\frac{1}{u_{1}u_{2}u_{3}u_{4}}.
\end{equation*}
\end{lemma}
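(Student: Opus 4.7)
The plan is to mimic the derivation of Lemma 2 at the five-variable level. The cleanest way to organize the argument, which I would use, is via the symmetric partial-fraction identity
\begin{equation*}
\frac{1}{a_1 a_2 \cdots a_n} \;=\; \sum_{\sigma \in S_n}\prod_{j=1}^{n}\frac{1}{a_{\sigma(1)}+a_{\sigma(2)}+\cdots+a_{\sigma(j)}},
\end{equation*}
which is proved by induction on $n$: group the $n!$ permutations according to the value of $\sigma(n)$ (so that the $j=n$ factor is always $1/(a_1+\cdots+a_n)$), and apply the inductive hypothesis to the remaining $n-1$ entries; the resulting sum telescopes using $\sum_m a_m = a_1+\cdots+a_n$. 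This identity packages in one step the ``multiply by $(\sum a_j)/(\sum a_j)$, then symmetrize and apply partial fractions'' maneuver that the paper executes by hand in Lemma 2.

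Taking $n=5$, $a_j=i_j$, and writing $N=mp^r$, the last factor $s_{\sigma,5}=i_1+\cdots+i_5=N$ is independent of $\sigma$, so
\begin{equation*}
\frac{1}{i_1 i_2 i_3 i_4 i_5} \;=\; \frac{1}{N}\sum_{\sigma\in S_5}\prod_{j=1}^{4}\frac{1}{s_{\sigma,j}}, \qquad s_{\sigma,j}:=i_{\sigma(1)}+\cdots+i_{\sigma(j)}.
\end{equation*}
I would then sum both sides over admissible $5$-tuples $(i_1,\ldots,i_5)$ and interchange the two sums. For each fixed $\sigma$, the relabeling $i_k \mapsto i_{\sigma^{-1}(k)}$ is a bijection on the set of admissible tuples (both the constraint $\sum i_j = N$ and the conditions $i_j\in\mathcal{P}_p$ are symmetric under permutation), so the inner sum is the same for every $\sigma$. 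The sum over $S_5$ therefore collapses into the factor $5!=120$, giving
\begin{equation*}
\sum_{(i_1,\ldots,i_5)}\frac{1}{i_1\cdots i_5} \;=\; \frac{120}{N}\sum_{(i_1,\ldots,i_5)}\frac{1}{u_1 u_2 u_3 u_4}, \qquad u_j := i_1+\cdots+i_j.
\end{equation*}

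To finish, I would re-parametrize by the $4$-tuple of partial sums. The map $(i_1,\ldots,i_5)\mapsto(u_1,u_2,u_3,u_4)$ is a bijection between compositions of $N$ into five positive parts and $4$-subsets $\{u_1<u_2<u_3<u_4\}$ of $\{1,\ldots,N-1\}$, with inverse $i_1=u_1$, $i_k=u_k-u_{k-1}$ for $k=2,3,4$, and $i_5=N-u_4$. Under this bijection the five $\mathcal{P}_p$-conditions translate to $u_1,\,u_2-u_1,\,u_3-u_2,\,u_4-u_3\in\mathcal{P}_p$ and $N-u_4\in\mathcal{P}_p$; since $p\mid N$ the latter is equivalent to $u_4\in\mathcal{P}_p$, which also rules out $u_4=N$, so writing $u_4\le N$ instead of $u_4<N$ adds no extra terms. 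This reproduces exactly the right-hand side of the claimed identity. The only non-routine step is the shuffle identity, whose inductive proof is entirely elementary, so I do not foresee any serious obstacle; the rest is careful bookkeeping of the $\mathcal{P}_p$-conditions.
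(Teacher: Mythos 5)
Your proof is correct and is essentially the paper's approach: the paper justifies Lemma 9 only by asserting it is ``similar to the proof of Lemma 2'', and your shuffle identity $\frac{1}{a_1\cdots a_n}=\sum_{\sigma\in S_n}\prod_{j=1}^{n}(a_{\sigma(1)}+\cdots+a_{\sigma(j)})^{-1}$ is exactly the closed form of the iterated ``multiply by $(\sum i_j)/(mp^r)$ and symmetrize'' step that Lemma 2 carries out by hand, followed by the same reparametrization by partial sums. Your bookkeeping at the end --- that $p\mid mp^r$ makes $mp^r-u_4\in\mathcal{P}_p$ equivalent to $u_4\in\mathcal{P}_p$ and rules out $u_4=mp^r$, so the bound $u_4\le mp^r$ is harmless --- is the correct way to match the stated right-hand side.
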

\begin{proof}
The proof of Lemma 9 is similar to the proof of Lemma 2.
\end{proof}
\section{Proofs of the Theorems.}
\begin{proof}[Proof of Theorem 1 \\]
It is easy to see that
\begin{align}
 \sum\limits_{i+j+k=2p^{r}\atop{i,j,k\in \mathcal{P}_{p}}}\frac{(-1)^{i}}{ijk}
 &=\frac{1}{2p^{r}}\sum\limits_{i+j+k=2p^{r}\atop{i,j,k\in \mathcal{P}_{p}}}\frac{(-1)^{i}(i+j+k)}{ijk}
\nonumber\\&=\frac{1}{2p^{r}}\sum\limits_{i+j+k=2p^{r}\atop{i,j,k\in
\mathcal{P}_{p}}}(\frac{(-1)^{i}}{jk}+\frac{2(-1)^{i}}{ij}).\label{eq:3}
\end{align}
Let $l=j+k$,then $1\leq j< l\leq 2p^{r}$ and $j,~l,~l-j\in
\mathcal{P}_{p}$, hence
\begin{align}
\sum\limits_{i+j+k=2p^{r}\atop{i,j,k\in
\mathcal{P}_{p}}}\frac{(-1)^{i}}{jk}=\sum\limits_{1\leq j< l\leq
2p^{r}\atop{j,~l,~l-j\in
\mathcal{P}_{p}}}\frac{1}{l}\frac{(-1)^{l}(j+k)}{jk}
=\sum\limits_{1\leq j< l\leq 2p^{r}\atop{j,~l,~l-j\in
\mathcal{P}_{p}}}\frac{2(-1)^{l}}{jl}.\label{eq:4}
\end{align}
Let $l'=i+j$,then $1\leq i< l'\leq 2p^{r}$ and $i,~l',~l'-i\in
\mathcal{P}_{p}$, hence
\begin{align}
\sum\limits_{i+j+k=2p^{r}\atop{i,j,k\in
\mathcal{P}_{p}}}\frac{2(-1)^{i}}{ij}&=\sum\limits_{1\leq i< l'\leq
2p^{r}\atop{i,~l',~l'-i\in
\mathcal{P}_{p}}}\frac{1}{l'}\frac{2(-1)^{i}(i+j)}{ij}
\nonumber\\&=\sum\limits_{1\leq i< l'\leq
2p^{r}\atop{i,~l',~l'-i\in
\mathcal{P}_{p}}}(\frac{2(-1)^{i}}{jl'}+\frac{2(-1)^{i}}{il'}).\label{eq:5}
\end{align}
Noting that $i=l'-j$, $(-1)^{l'-j}=(-1)^{l'+j}$ and we rename $l'$
to $l$, then
\begin{align}
\sum\limits_{1\leq i< l'\leq 2p^{r}\atop{i,~l',~l'-i\in
\mathcal{P}_{p}}}\frac{2(-1)^{i}}{jl'}=\sum\limits_{1\leq j< l\leq
2p^{r}\atop{j,~l,~l-j\in
\mathcal{P}_{p}}}\frac{2(-1)^{j+l}}{jl}.\label{eq:6}
\end{align}
Rename $i$ to $j$ and $l'$ to $l$, then
\begin{align}
\sum\limits_{1\leq i< l'\leq 2p^{r}\atop{i,~l',~l'-i\in
\mathcal{P}_{p}}}\frac{2(-1)^{i}}{il'}=\sum\limits_{1\leq j< l\leq
2p^{r}\atop{j,~l,~l-j\in
\mathcal{P}_{p}}}\frac{2(-1)^{j}}{jl}.\label{eq:7}
\end{align}
   Combining \eqref{eq:3}-\eqref{eq:7}, we have
\begin{align}
 \sum\limits_{i+j+k=2p^{r}\atop{i,j,k\in \mathcal{P}_{p}}}\frac{(-1)^{i}}{ijk}
 &=\frac{1}{p^{r}}\sum\limits_{1\leq j< l\leq
2p^{r}\atop{j,~l,~l-j\in
\mathcal{P}_{p}}}(\frac{(-1)^{l}}{jl}+\frac{(-1)^{j+l}}{jl}+\frac{(-1)^{j}}{jl})
\nonumber\\&=\frac{1}{p^{r}}\sum\limits_{1\leq j< l\leq
2p^{r}\atop{j,~l,~l-j\in\mathcal{P}_{p}}}\frac{(1+(-1)^{l})(1+(-1)^{j})}{jl}-\frac{1}{p^{r}}\sum\limits_{1\leq
j< l\leq 2p^{r}\atop{j,~l,~l-j\in\mathcal{P}_{p}}}\frac{1}{jl}
\nonumber\\&=\frac{1}{p^{r}}\sum\limits_{1\leq j< l\leq
2p^{r}\atop{j,~l,~l-j\in\mathcal{P}_{p}},j even,~l
even}\frac{4}{jl}-\frac{1}{p^{r}}\sum\limits_{1\leq j< l\leq
2p^{r}\atop{j,~l,~l-j\in\mathcal{P}_{p}}}\frac{1}{jl}.\label{eq:8}
\end{align}
Let $j=2j',~l=2l'$ in the first sum of \eqref{eq:8} and noting that
\begin{align}
\sum\limits_{1\leq j'< l'\leq
p^{r}\atop{j',~l',~l'-j'\in\mathcal{P}_{p}}}\frac{1}{j'l'}=\sum\limits_{1\leq j< l\leq
p^{r}\atop{j,~l,~l-j\in\mathcal{P}_{p}}}\frac{1}{jl},\nonumber
\end{align}
\eqref{eq:8} is equal to
\begin{align}
 \sum\limits_{i+j+k=2p^{r}\atop{i,j,k\in \mathcal{P}_{p}}}\frac{(-1)^{i}}{ijk}
 =\frac{1}{p^{r}}\sum\limits_{1\leq j< l\leq
p^{r}\atop{j,~l,~l-j\in\mathcal{P}_{p}}}\frac{1}{jl}-\frac{1}{p^{r}}\sum\limits_{1\leq j< l\leq
2p^{r}\atop{j,~l,~l-j\in\mathcal{P}_{p}}}\frac{1}{jl}.\label{eq:9}
\end{align}
By Lemma 1, Lemma 2 and \eqref{eq:9}, we obtain
\begin{align}
 \sum\limits_{i+j+k=2p^{r}\atop{i,j,k\in \mathcal{P}_{p}}}\frac{(-1)^{i}}{ijk}
 &=\frac{1}{p^{r}}\sum\limits_{1\leq j< l\leq
p^{r}\atop{j,~l,~l-j\in\mathcal{P}_{p}}}\frac{1}{jl}-\frac{1}{p^{r}}\sum\limits_{1\leq j< l\leq
2p^{r}\atop{j,~l,~l-j\in\mathcal{P}_{p}}}\frac{1}{jl}
\nonumber\\&\equiv p^{r-1}B_{p-3} ~~(\bmod  ~ p^{r}).\nonumber
\end{align}
This completes the proof of Theorem 1.
\end{proof}
\begin{proof}[Proof of Theorem 2 \\]
For every triple $(i,~j,~k)$ of positive integers which satisfies
$i+j+k=2p^{r},~i,j,k\in \mathcal{P}_{p}$, we take it to 3 cases.\\

Cases 1. If $1\leq i,~j,~k\leq p^{r}-1$ are coprime to $pq$,
$(i,~j,~k)\leftrightarrow (p^{r}-i,~p^{r}-j,~p^{r}-k)$ is a
bijection between the solutions of $i+j+k=2p^{r}$ and
$i+j+k=p^{r},~i,j,k\in \mathcal{P}_{p}$, we have
 \begin{align}
 \sum_{\substack{i+j+k=2p^{r}\\i,j,k\in \mathcal{P}_{p}\\1\leq i,~j,~k\leq p^{r}-1}}\frac{(-1)^{i}}{ijk}
 &\equiv \sum\limits_{i+j+k=p^{r}\atop{i,j,k\in \mathcal{P}_{p}}}\frac{(-1)^{p^{r}-i}}{(p^{r}-i)(p^{r}-j)(p^{r}-k)}
\nonumber\\& \equiv \sum\limits_{i+j+k=p^{r}\atop{i,j,k\in
\mathcal{P}_{p}}}\frac{(-1)^{i}}{ijk}\pmod{p^{r}}.\label{eq:10}
\end{align}
Cases 2. If $p^{r}+1\leq i\leq 2p^{r}-1,~1\leq j,~k\leq p^{r}-1$
are coprime to $p$, $(i,~j,~k)\leftrightarrow (p^{r}+i,~j,~k)$ is
a bijection between the solutions of $i+j+k=2p^{r}$ and
$i+j+k=p^{r},~i,j,k\in \mathcal{P}_{p}$, we have
 \begin{align}
 \sum_{\substack{i+j+k=2p^{r}\\i,j,k\in \mathcal{P}_{p}\\p^{r}+1\leq i\leq 2p^{r}-1,1\leq j,~k\leq p^{r}-1}}\frac{(-1)^{i}}{ijk}
 &\equiv \sum\limits_{i+j+k=p^{r}\atop{i,j,k\in \mathcal{P}_{p}}}\frac{(-1)^{p^{r}+i}}{(p^{r}+i)jk}
\nonumber\\& \equiv -\sum\limits_{i+j+k=p^{r}\atop{i,j,k\in
\mathcal{P}_{p}}}\frac{(-1)^{i}}{ijk}\pmod{p^{r}}.\label{eq:11}
\end{align}
Cases 3. If $p^{r}+1\leq j\leq 2p^{r}-1,~1\leq i,~k\leq p^{r}-1$
or $p^{r}+1\leq k\leq 2p^{r}-1,1\leq i,~j\leq p^{r}-1$are coprime
to $p$, $(i,~j,~k)\leftrightarrow (i,~p^{r}+j,~k)$ in the former
and $(i,~j,~k)\leftrightarrow (i,~j,~p^{r}+k)$ in the later are
the bijections between the solutions of $i+j+k=2p^{r}$ and
$i+j+k=p^{r},~i,j,k\in \mathcal{P}_{p}$, we have
 \begin{align}
 &\sum_{\substack{i+j+k=2p^{r}\\i,j,k\in \mathcal{P}_{p}\\p^{r}+1\leq j\leq 2p^{r}-1,1\leq i,~k\leq p^{r}-1}}\frac{(-1)^{i}}{ijk}
+\sum_{\substack{i+j+k=2p^{r}\\i,j,k\in \mathcal{P}_{p}\\p^{r}+1\leq
k\leq 2p^{r}-1,1\leq i,~j\leq p^{r}-1}}\frac{(-1)^{i}}{ijk}
 \nonumber\\&\equiv \sum\limits_{i+j+k=p^{r}\atop{i,j,k\in
 \mathcal{P}_{p}}}\frac{(-1)^{i}}{i(p^{r}+j)k}+\sum\limits_{i+j+k=p^{r}\atop{i,j,k\in
 \mathcal{P}_{p}}}\frac{(-1)^{i}}{ij(p^{r}+k)}
\nonumber\\& \equiv 2\sum\limits_{i+j+k=p^{r}\atop{i,j,k\in
\mathcal{P}_{p}}}\frac{(-1)^{i}}{ijk}\pmod{p^{r}}.\label{eq:12}
\end{align}
Combining \eqref{eq:10}-\eqref{eq:12}, we have
 \begin{align}
 \sum\limits_{i+j+k=2p^{r}\atop{i,j,k\in \mathcal{P}_{p}}}\frac{(-1)^{i}}{ijk}&=
 \sum_{\substack{i+j+k=2p^{r}\\i,j,k\in \mathcal{P}_{p}\\1\leq i,~j,~k\leq p^{r}-1}}\frac{(-1)^{i}}{ijk}
+ \sum_{\substack{i+j+k=2p^{r}\\i,j,k\in
\mathcal{P}_{p}\\p^{r}+1\leq i\leq 2p^{r}-1,1\leq j,~k\leq
p^{r}-1}}\frac{(-1)^{i}}{ijk}
\nonumber\\&+\sum_{\substack{i+j+k=2p^{r}\\i,j,k\in
\mathcal{P}_{p}\\p^{r}+1\leq j\leq 2p^{r}-1,1\leq i,~k\leq
p^{r}-1}}\frac{(-1)^{i}}{ijk}
+\sum_{\substack{i+j+k=2p^{r}\\i,j,k\in
\mathcal{P}_{p}\\p^{r}+1\leq k\leq 2p^{r}-1,1\leq i,~j\leq
p^{r}-1}}\frac{(-1)^{i}}{ijk}
 \nonumber\\&\equiv 2\sum\limits_{i+j+k=p^{r}\atop{i,j,k\in
\mathcal{P}_{p}}}\frac{(-1)^{i}}{ijk}\pmod{p^{r}}.\nonumber
\end{align}
By Theorem 1, we complete the proof of Theorem 2.
\end{proof}

\begin{proof}[Proof of Theorem 3 \\]
By symmetry, it is easy to see that
 \begin{align}
 &\sum\limits_{i_{1}+i_{2}+i_{3}+i_{4}=2p^{r}\atop{i_{1},~i_{2},~i_{3},~i_{4}\in \mathcal{P}_{p}}}\frac{(-1)^{i_{1}}}{i_{1}i_{2}i_{3}i_{4}}
 =\frac{1}{2p^{r}}\sum\limits_{i_{1}+i_{2}+i_{3}+i_{4}=2p^{r}\atop{i_{1},~i_{2},~i_{3},~i_{4}\in \mathcal{P}_{p}}}\frac{(-1)^{i_{1}}(i_{1}+i_{2}+i_{3}+i_{4})}{i_{1}i_{2}i_{3}i_{4}}
\nonumber\\&=\frac{1}{2p^{r}}\sum\limits_{i_{1}+i_{2}+i_{3}+i_{4}=2p^{r}\atop{i_{1},~i_{2},~i_{3},~i_{4}\in
\mathcal{P}_{p}}}(\frac{(-1)^{i_{1}}}{i_{2}i_{3}i_{4}}+3\frac{(-1)^{i_{1}}}{i_{1}i_{3}i_{4}}).\label{eq:19}
\end{align}
Let $u_{3}=i_{2}+i_{3}+i_{4}$ in the first sum of the last
equation in \eqref{eq:19}, then $i_{1}=2p^{r}-u_{3}$,
\eqref{eq:19} equals to
\begin{align}
\nonumber\\&=\frac{1}{2p^{r}}[\sum\limits_{u_{3}=i_{2}+i_{3}+i_{4}<2p^{r}\atop{u_{3},~i_{2},~i_{3},~i_{4}\in
\mathcal{P}_{p}}}\frac{(-1)^{2p^{r}-u_{3}}(i_{2}+i_{3}+i_{4})}{i_{2}i_{3}i_{4}u_{3}}
\nonumber\\&~~~~~~+3\sum\limits_{u_{3}=i_{1}+i_{3}+i_{4}<2p^{r}\atop{u_{3},~i_{1},~i_{3},~i_{4}\in
\mathcal{P}_{p}}}\frac{(-1)^{i_{1}}(i_{1}+i_{3}+i_{4})}{i_{1}i_{3}i_{4}u_{3}}]
\nonumber\\&=\frac{1}{2p^{r}}[3\sum\limits_{u_{3}=i_{2}+i_{3}+i_{4}<2p^{r}\atop{u_{3},~i_{2},~i_{3},~i_{4}\in
\mathcal{P}_{p}}}\frac{(-1)^{u_{3}}}{i_{3}i_{4}u_{3}}
+3\sum\limits_{u_{3}=i_{1}+i_{3}+i_{4}<2p^{r}\atop{u_{3},~i_{1},~i_{3},~i_{4}\in
\mathcal{P}_{p}}}\frac{(-1)^{i_{1}}}{i_{3}i_{4}u_{3}}
\nonumber\\&~~~~~~+6\sum\limits_{u_{3}=i_{1}+i_{3}+i_{4}<2p^{r}\atop{u_{3},~i_{1},~i_{3},~i_{4}\in
\mathcal{P}_{p}}}\frac{(-1)^{i_{1}}}{i_{1}i_{3}u_{3}}].\label{eq:20}
\end{align}
Let $u_{2}=i_{3}+i_{4}$ in the second  sum of the last equation in
\eqref{eq:20}, since $u_{3}=i_{1}+i_{3}+i_{4}$ , then
$i_{1}=u_{3}-u_{2}$, \eqref{eq:20} equals to

\begin{align}
\nonumber\\&=\frac{1}{2p^{r}}[3\sum\limits_{u_{2}=i_{3}+i_{4}<u_{3}=<2p^{r}\atop{u_{3},~u_{3}-u_{2},~i_{3},~i_{4}\in
\mathcal{P}_{p}}}\frac{(-1)^{u_{3}}(i_{3}+i_{4})}{i_{3}i_{4}u_{2}u_{3}}
+3\sum\limits_{u_{2}=i_{3}+i_{4}<u_{3}=<2p^{r}\atop{u_{3},~u_{3}-u_{2},~i_{3},~i_{4}\in
\mathcal{P}_{p}}}\frac{(-1)^{u_{3}-u_{2}}(i_{3}+i_{4})}{i_{3}i_{4}u_{3}}
\nonumber\\&~~~~~~+6\sum\limits_{u_{2}=i_{1}+i_{3}<u_{3}=<2p^{r}\atop{u_{3},~u_{3}-u_{2},~i_{1},~i_{3}\in
\mathcal{P}_{p}}}\frac{(-1)^{i_{1}}(i_{1}+i_{3})}{i_{1}i_{3}u_{2}u_{3}}
\nonumber\\&=\frac{1}{2p^{r}}[6\sum\limits_{0<u_{1}<u_{2}<u_{3}=<2p^{r}\atop{u_{1},~u_{3},~u_{3}-u_{2},~u_{2}-u_{1}\in
\mathcal{P}_{p}}}\frac{(-1)^{u_{3}}}{u_{1}u_{2}u_{3}}
+6\sum\limits_{0<u_{1}<u_{2}<u_{3}=<2p^{r}\atop{u_{1},~u_{3},~u_{3}-u_{2},~u_{2}-u_{1}\in
\mathcal{P}_{p}}}\frac{(-1)^{u_{3}-u_{2}}}{u_{1}u_{2}u_{3}}
\nonumber\\&~~~~~~+6\sum\limits_{0<u_{1}<u_{2}<u_{3}=<2p^{r}\atop{u_{1},~u_{3},~u_{3}-u_{2},~u_{2}-u_{1}\in
\mathcal{P}_{p}}}\frac{(-1)^{u_{2}-u_{1}}}{u_{1}u_{2}u_{3}}+6\sum\limits_{0<u_{1}<u_{2}<u_{3}=<2p^{r}\atop{u_{1},~u_{3},~u_{3}-u_{2},~u_{2}-u_{1}\in
\mathcal{P}_{p}}}\frac{(-1)^{u_{1}}}{u_{1}u_{2}u_{3}}
\nonumber\\&=\frac{3}{p^{r}}\sum\limits_{0<u_{1}<u_{2}<u_{3}=<2p^{r}\atop{u_{1},~u_{3},~u_{3}-u_{2},~u_{2}-u_{1}\in
\mathcal{P}_{p}}}\frac{(-1)^{u_{3}}+(-1)^{u_{2}+u_{3}}+(-1)^{u_{1}+u_{2}}+(-1)^{u_{1}}}{u_{1}u_{2}u_{3}}.\nonumber
\end{align}
Similarly, we have
 \begin{align}
 &\sum\limits_{i_{1}+i_{2}+i_{3}+i_{4}=2p^{r}\atop{i_{1},~i_{2},~i_{3},~i_{4}\in \mathcal{P}_{p}}}\frac{(-1)^{i_{1}+i_{2}}}{i_{1}i_{2}i_{3}i_{4}}
\nonumber\\& =\frac{4}{p^{r}}\sum\limits_{0<u_{1}<u_{2}<u_{3}=<2p^{r}\atop{u_{1},~u_{3},~u_{3}-u_{2},~u_{2}-u_{1}\in \mathcal{P}_{p}}}\frac{(-1)^{u_{2}}+(-1)^{u_{1}+u_{2}+u_{3}}+(-1)^{u_{1}+u_{3}}}{u_{1}u_{2}u_{3}}.\nonumber
\end{align}
Hence
\begin{align}
 &4\sum\limits_{i_{1}+i_{2}+i_{3}+i_{4}=2p^{r}\atop{i_{1},~i_{2},~i_{3},~i_{4}\in \mathcal{P}_{p}}}\frac{(-1)^{i_{1}}}{i_{1}i_{2}i_{3}i_{4}}+3\sum\limits_{i_{1}+i_{2}+i_{3}+i_{4}=2p^{r}\atop{i_{1},~i_{2},~i_{3},~i_{4}\in \mathcal{P}_{p}}}\frac{(-1)^{i_{1}+i_{2}}}{i_{1}i_{2}i_{3}i_{4}}
\nonumber\\& =\frac{12}{p^{r}}\sum\limits_{0<u_{1}<u_{2}<u_{3}=<2p^{r}\atop{u_{1},~u_{3},~u_{3}-u_{2},~u_{2}-u_{1}\in \mathcal{P}_{p}}}\frac{[1+(-1)^{u_{1}}][1+(-1)^{u_{2}}][1+(-1)^{u_{3}}]-1}{u_{1}u_{2}u_{3}}
\nonumber\\& =\frac{12}{p^{r}}\sum\limits_{0<u_{1}<u_{2}<u_{3}=<p^{r}\atop{u_{1},~u_{3},~u_{3}-u_{2},~u_{2}-u_{1}\in \mathcal{P}_{p}}}\frac{1}{u_{1}u_{2}u_{3}}-\frac{12}{p^{r}}\sum\limits_{0<u_{1}<u_{2}<u_{3}=<2p^{r}\atop{u_{1},~u_{3},~u_{3}-u_{2},~u_{2}-u_{1}\in \mathcal{P}_{p}}}\frac{1}{u_{1}u_{2}u_{3}}.\nonumber
\end{align}
By Lemma 3, we have
\begin{align}
 &4\sum\limits_{i_{1}+i_{2}+i_{3}+i_{4}=2p^{r}\atop{i_{1},~i_{2},~i_{3},~i_{4}\in \mathcal{P}_{p}}}\frac{(-1)^{i_{1}}}{i_{1}i_{2}i_{3}i_{4}}+3\sum\limits_{i_{1}+i_{2}+i_{3}+i_{4}=2p^{r}\atop{i_{1},~i_{2},~i_{3},~i_{4}\in \mathcal{P}_{p}}}\frac{(-1)^{i_{1}+i_{2}}}{i_{1}i_{2}i_{3}i_{4}}
\nonumber\\& =\frac{1}{2}\sum\limits_{i_{1}+i_{2}+i_{3}+i_{4}=p^{r}\atop{i_{1},~i_{2},~i_{3},~i_{4}\in \mathcal{P}_{p}}}\frac{1}{i_{1}i_{2}i_{3}i_{4}}
-\sum\limits_{i_{1}+i_{2}+i_{3}+i_{4}=2p^{r}\atop{i_{1},~i_{2},~i_{3},~i_{4}\in \mathcal{P}_{p}}}\frac{1}{i_{1}i_{2}i_{3}i_{4}}.\nonumber
\end{align}
By \eqref{eq:13} and Lemma 6, we have
\begin{align}
 &4\sum\limits_{i_{1}+i_{2}+i_{3}+i_{4}=2p\atop{i_{1},~i_{2},~i_{3},~i_{4}\in \mathcal{P}_{p}}}\frac{(-1)^{i_{1}}}{i_{1}i_{2}i_{3}i_{4}}+3\sum\limits_{i_{1}+i_{2}+i_{3}+i_{4}=2p\atop{i_{1},~i_{2},~i_{3},~i_{4}\in \mathcal{P}_{p}}}\frac{(-1)^{i_{1}+i_{2}}}{i_{1}i_{2}i_{3}i_{4}}
\nonumber\\& \equiv-\frac{24}{5}pB_{p-5}
+\frac{240}{5}pB_{p-5}\equiv\frac{216}{5}pB_{p-5}\pmod{p^{2}}.\nonumber
\end{align}
By \eqref{eq:14} and Lemma 7, if $r\geq 2$, then
\begin{align}
 &4\sum\limits_{i_{1}+i_{2}+i_{3}+i_{4}=2p^{r}\atop{i_{1},~i_{2},~i_{3},~i_{4}\in \mathcal{P}_{p}}}\frac{(-1)^{i_{1}}}{i_{1}i_{2}i_{3}i_{4}}+3\sum\limits_{i_{1}+i_{2}+i_{3}+i_{4}=2p^{r}\atop{i_{1},~i_{2},~i_{3},~i_{4}\in \mathcal{P}_{p}}}\frac{(-1)^{i_{1}+i_{2}}}{i_{1}i_{2}i_{3}i_{4}}
\nonumber\\& \equiv-\frac{12}{5}p^{r}B_{p-5}
+\frac{48}{5}p^{r}B_{p-5}\equiv\frac{36}{5}p^{r}B_{p-5}\pmod{p^{r+1}}.\nonumber
\end{align}
This completes the proof of Theorem 3.
\end{proof}

\begin{proof}[Proof of Theorem 4 \\]
Similar to the proofs of Theorem 1 and Theorem 3, we have
 \begin{align}
 \sum\limits_{i_{1}+i_{2}+i_{3}+i_{4}+i_{5}=2p^{r}\atop{i_{1},~i_{2},~i_{3},~i_{4},~i_{5}\in \mathcal{P}_{p}}}\frac{(-1)^{i_{1}}}{i_{1}i_{2}i_{3}i_{4}i_{5}}
&=\frac{12}{p^{r}}\sum\limits_{0<u_{1}<u_{2}<u_{3}<u_{4}=<2p^{r}\atop{u_{1},~u_{4},~u_{2}-u_{1},~u_{3}-u_{2},~u_{4}-u_{3}\in
\mathcal{P}_{p}}}
 \frac{1}{u_{1}u_{2}u_{3}u_{4}}((-1)^{u_{1}}
  \nonumber\\&+(-1)^{u_{4}}+(-1)^{u_{1}+u_{2}}+(-1)^{u_{2}+u_{3}}+(-1)^{u_{3}+u_{4}})\nonumber
\end{align}
and
\begin{align}
 &\sum\limits_{i_{1}+i_{2}+i_{3}+i_{4}+i_{5}=2p^{r}\atop{i_{1},~i_{2},~i_{3},~i_{4},~i_{5}\in \mathcal{P}_{p}}}\frac{(-1)^{i_{1}+i_{2}}}{i_{1}i_{2}i_{3}i_{4}i_{5}}
 =\frac{6}{p^{r}}\sum\limits_{0<u_{1}<u_{2}<u_{3}<u_{4}=<2p^{r}\atop{u_{1},~u_{4},~u_{2}-u_{1},~u_{3}-u_{2},~u_{4}-u_{3}\in \mathcal{P}_{p}}}
 \frac{1}{u_{1}u_{2}u_{3}u_{4}}((-1)^{u_{2}}
 \nonumber\\&+(-1)^{u_{3}}+(-1)^{u_{1}+u_{3}}+(-1)^{u_{1}+u_{4}}+(-1)^{u_{2}+u_{4}}+(-1)^{u_{1}+u_{2}+u_{3}}
 \nonumber\\&+(-1)^{u_{1}+u_{2}+u_{4}}+(-1)^{u_{1}+u_{3}+u_{4}}+(-1)^{u_{2}+u_{3}+u_{4}}+(-1)^{u_{1}+u_{2}+u_{3}+u_{4}})
\nonumber
\end{align}
Hence
\begin{align}
 &\sum\limits_{i_{1}+i_{2}+i_{3}+i_{4}+i_{5}=2p^{r}\atop{i_{1},~i_{2},~i_{3},~i_{4},~i_{5}\in \mathcal{P}_{p}}}\frac{(-1)^{i_{1}}}{i_{1}i_{2}i_{3}i_{4}i_{5}}+2\sum\limits_{i_{1}+i_{2}+i_{3}+i_{4}+i_{5}=2p^{r}\atop{i_{1},~i_{2},~i_{3},~i_{4},~i_{5}\in \mathcal{P}_{p}}}\frac{(-1)^{i_{1}+i_{2}}}{i_{1}i_{2}i_{3}i_{4}i_{5}}
\nonumber\\& =\frac{12}{p^{r}}\sum\limits_{0<u_{1}<u_{2}<u_{3}<u_{4}=<2p^{r}\atop{u_{1},~u_{4},~u_{2}-u_{1},~u_{3}-u_{2},~u_{4}-u_{3}\in \mathcal{P}_{p}}}\frac{[1+(-1)^{u_{1}}][1+(-1)^{u_{2}}][1+(-1)^{u_{3}}][1+(-1)^{u_{4}}]-1}{u_{1}u_{2}u_{3}u_{4}}
\nonumber\\& =\frac{12}{p^{r}}\sum\limits_{0<u_{1}<u_{2}<u_{3}<u_{4}=<p^{r}\atop{u_{1},~u_{4},~u_{2}-u_{1},~u_{3}-u_{2},~u_{4}-u_{3}\in \mathcal{P}_{p}}}\frac{1}{u_{1}u_{2}u_{3}u_{4}}-\frac{12}{p^{r}}\sum\limits_{0<u_{1}<u_{2}<u_{3}<u_{4}=<2p^{r}\atop{u_{1},~u_{4},~u_{2}-u_{1},~u_{3}-u_{2},~u_{4}-u_{3}\in \mathcal{P}_{p}}}\frac{1}{u_{1}u_{2}u_{3}u_{4}}.\nonumber
\end{align}
By Lemma 9, we have
 \begin{align}
 &\sum\limits_{i_{1}+i_{2}+i_{3}+i_{4}+i_{5}=2p^{r}\atop{i_{1},~i_{2},~i_{3},~i_{4},~i_{5}\in \mathcal{P}_{p}}}\frac{(-1)^{i_{1}}}{i_{1}i_{2}i_{3}i_{4}i_{5}}+2\sum\limits_{i_{1}+i_{2}+i_{3}+i_{4}+i_{5}=2p^{r}\atop{i_{1},~i_{2},~i_{3},~i_{4},~i_{5}\in \mathcal{P}_{p}}}\frac{(-1)^{i_{1}+i_{2}}}{i_{1}i_{2}i_{3}i_{4}i_{5}}
\nonumber\\& =\frac{1}{10}\sum\limits_{i_{1}+i_{2}+i_{3}+i_{4}+i_{5}=p^{r}\atop{i_{1},~i_{2},~i_{3},~i_{4},~i_{5}\in \mathcal{P}_{p}}}\frac{1}{i_{1}i_{2}i_{3}i_{4}i_{5}}
-\frac{2}{10}\sum\limits_{i_{1}+i_{2}+i_{3}+i_{4}+i_{5}=2p^{r}\atop{i_{1},~i_{2},~i_{3},~i_{4},~i_{5}\in \mathcal{P}_{p}}}\frac{1}{i_{1}i_{2}i_{3}i_{4}i_{5}}.\nonumber
\end{align}
By \eqref{eq:13} and Lemma 8(1), we have
 \begin{align}
 &\sum\limits_{i_{1}+i_{2}+i_{3}+i_{4}+i_{5}=2p\atop{i_{1},~i_{2},~i_{3},~i_{4},~i_{5}\in \mathcal{P}_{p}}}\frac{(-1)^{i_{1}}}{i_{1}i_{2}i_{3}i_{4}i_{5}}+2\sum\limits_{i_{1}+i_{2}+i_{3}+i_{4}+i_{5}=2p\atop{i_{1},~i_{2},~i_{3},~i_{4},~i_{5}\in \mathcal{P}_{p}}}\frac{(-1)^{i_{1}+i_{2}}}{i_{1}i_{2}i_{3}i_{4}i_{5}}
\nonumber\\& \equiv-\frac{24}{10}B_{p-5}
+\frac{144}{10}B_{p-5}\equiv12B_{p-5}\pmod{p}.\nonumber
\end{align}
By  Lemma 8(2), if $r\geq 2$, then
\begin{align}
 &\sum\limits_{i_{1}+i_{2}+i_{3}+i_{4}+i_{5}=2p^{r}\atop{i_{1},~i_{2},~i_{3},~i_{4},~i_{5}\in \mathcal{P}_{p}}}\frac{(-1)^{i_{1}}}{i_{1}i_{2}i_{3}i_{4}i_{5}}+2\sum\limits_{i_{1}+i_{2}+i_{3}+i_{4}+i_{5}=2p^{r}\atop{i_{1},~i_{2},~i_{3},~i_{4},~i_{5}\in \mathcal{P}_{p}}}\frac{(-1)^{i_{1}+i_{2}}}{i_{1}i_{2}i_{3}i_{4}i_{5}}
\nonumber\\& \equiv-2p^{r-1}B_{p-5}
+8p^{r-1}B_{p-5}\equiv6p^{r-1}B_{p-5}\pmod{p^{r}}.\nonumber
\end{align}
This completes the proof of Theorem 4.
\end{proof}
\textbf{Remark 2}~ Let $p$ be odd prime and $r,~m$ positive
integers, $(m,p)=1$, using Lemma 1 and Lemma 2, similar to the
proof of Theorem 1, we can prove that
 \begin{equation*}
  \sum\limits_{i+j+k=2mp^{r}\atop{i,j,k\in \mathcal{P}_{p}}}\frac{(-1)^{i}}{ijk}\equiv mp^{r-1}B_{p-3} ~(\bmod ~ p^{r}).
\end{equation*}

In particular, if $m=1$, it becomes Theorem 1.\\\

 Let $p$ be odd prime and $r,~m$ positive
integers, $(m,p)=1$, similar to the proof of Theorem 2, we can
prove that
 \begin{equation*}
  \sum\limits_{i+j+k=mp^{r}\atop{i,j,k\in \mathcal{P}_{p}}}\frac{(-1)^{i}}{ijk}
  \equiv \frac{1}{2}\sum\limits_{i+j+k=2mp^{r}\atop{i,j,k\in \mathcal{P}_{p}}}\frac{(-1)^{i}}{ijk}\equiv \frac{m}{2}p^{r-1}B_{p-3} ~(\bmod ~ p^{r}).
\end{equation*}

In particular, if $m=1$, it becomes Theorem 2.\\

Let $p>4$ be a prime and $r,~m$ positive integers, $(m,p)=1$, we
can deduce the congruence for
\begin{align}
 4\sum\limits_{i_{1}+i_{2}+i_{3}+i_{4}=2mp^{r}\atop{i_{1},~i_{2},~i_{3},~i_{4}\in \mathcal{P}_{p}}}\frac{(-1)^{i_{1}}}{i_{1}i_{2}i_{3}i_{4}}+3\sum\limits_{i_{1}+i_{2}+i_{3}+i_{4}=2mp^{r}\atop{i_{1},~i_{2},~i_{3},~i_{4}\in
  \mathcal{P}_{p}}}\frac{(-1)^{i_{1}+i_{2}}}{i_{1}i_{2}i_{3}i_{4}} \pmod{p^{r+1}}.\nonumber
\end{align}
Let $p>5$ be a prime and $r,~m$ positive integers, $(m,p)=1$, we
can deduce the congruence for
\begin{align}
 \sum\limits_{i_{1}+i_{2}+i_{3}+i_{4}+i_{5}=2mp^{r}\atop{i_{1},~i_{2},~i_{3},~i_{4},~i_{5}\in \mathcal{P}_{p}}}\frac{(-1)^{i_{1}}}{i_{1}i_{2}i_{3}i_{4}i_{5}}+2\sum\limits_{i_{1}+i_{2}+i_{3}+i_{4}+i_{5}=2mp^{r}\atop{i_{1},~i_{2},~i_{3},~i_{4},~i_{5}\in \mathcal{P}_{p}}}\frac{(-1)^{i_{1}+i_{2}}}{i_{1}i_{2}i_{3}i_{4}i_{5}}\pmod{p^{r}}.\nonumber
\end{align}

Similarly, we can consider the congruence
\begin{equation*}
 \sum\limits_{i_{1}+i_{2}+\cdots+i_{n}=p^{r}\atop{i_{1},i_{2},\cdots,i_{n}\in \mathcal{P}_{p}}}\frac{(\sigma_{1})^{i_{1}}(\sigma_{2})^{i_{2}}
 \cdots(\sigma_{n})^{i_{n}}}{i_{1}i_{2}\cdots i_{n}}~(\bmod ~p^{r+1}),
\end{equation*}
where $\sigma_{i}\in \{1,-1\},~i=1,~2,~\cdots,~n$, but it seems much
more complicated.



\end{document}